\newtheorem{theorem}{Theorem}[section]
\newtheorem{lemma}[theorem]{Lemma}
\newtheorem{corollary}[theorem]{Corollary}
\newtheorem{remark}[theorem]{Remark}
\theoremstyle{definition}
\title{When is a planar rod configuration infinitesimally rigid?}
\author{Signe Lundqvist, Klara Stokes and Lars-Daniel \"Ohman\\Ume{\aa} University, Sweden}
\date{}							
\begin{document}
\maketitle

\begin{center}
\textbf{Abstract}
\end{center}

In this article, we provide a way of determining the infinitesimal rigidity of rod configurations realizing a rank two incidence geometry in the Euclidean plane. We model each rod with a cone over its point set, and show that infinitesimal rigidity of generic frameworks of the resulting graph is equivalent to infinitesimal rigidity of sufficiently generic rod configurations realizing the incidence geometry. The generic rigidity of graphs in the plane is characterized by a counting condition, so this gives a combinatorial characterization of the infinitesimal rigidity of rod configurations.

Rank two incidence geometries are also known as hypergraphs. In 1984, Tay and Whiteley conjectured that the infinitesimal rigidity of rod configurations realizing $2$-regular hypergraphs is determined by the rigidity of generic body-and-hinge frameworks realizing the same hypergraph. Their conjecture is commonly known as the molecular conjecture because of its application to molecular chemistry. Jackson and Jordán proved the molecular conjecture in the plane in 2006 and in 2011 Katoh and Tanigawa proved the molecular conjecture in arbitrary dimension. 

In 1989, Whiteley proved a version of the molecular conjecture for hypergraphs of arbitrary degree that have realizations as independent body-and-joint frameworks, using the 2-plane matroid. Our result is an extension of Whiteley's result to arbitrary hypergraphs.

\section{Introduction}

The mathematical theory of structural rigidity has a long history. For example, in the nineteenth century, Cauchy studied rigidity of polyhedra, and Maxwell studied graph frameworks \cite{Cauchy, Maxwell}. Rigidity of graph frameworks has since been well-studied, in various contexts. Combinatorial rigidity theory is also concerned with geometric realizations of other combinatorial structures, for example geometric realizations of hypergraphs. A comprehensive review of results and topics in combinatorial rigidity theory, as well as relevant definitions, can be found in a review article by Nixon, Schulze and Whiteley \cite{Projective_Lens}.


Tay and Whiteley independently characterized which rank two incidence geometries have realizations as infinitesimally rigid body-and-hinge frameworks \cite{Tay1984, Whiteley88}. Furthermore, they jointly made the following conjecture \cite{TayWhi84}:

\begin{theorem} [Molecular conjecture]
	A rank two incidence geometry $S=(P,L,I)$ has a realization as an infinitesimally rigid body-and-hinge framework in $\mathbb{R}^d$ if and only if $S$ has a realization as an infinitesimally rigid panel-hinge framework in $\mathbb{R}^d$. 
	\label{molecular}
\end{theorem}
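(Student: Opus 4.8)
One implication is immediate: a panel-hinge framework is simply a body-and-hinge framework in which each body is realized as a hyperplane, so an infinitesimally rigid panel-hinge realization of $S$ is in particular an infinitesimally rigid body-and-hinge realization. The content of the theorem is therefore entirely in the converse, and the plan is to reduce that converse to a single combinatorial count and then to isolate the one genuinely geometric step.

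For the converse I would first invoke the Tay--Whiteley characterization. Writing $D=\binom{d+1}{2}$ for the dimension of the isometry group of $\mathbb{R}^d$, a hinge removes $D-1$ degrees of freedom, so a generic body-and-hinge realization of $S$ is infinitesimally rigid exactly when the multigraph $G$ underlying $S$, with every edge replaced by $D-1$ parallel copies, contains $D$ edge-disjoint spanning trees (equivalently, satisfies the associated Nash--Williams tree-packing count). Since the rank of the rigidity matrix is maximized on a Zariski-open dense set of realizations, the existence of \emph{some} infinitesimally rigid body-and-hinge realization is equivalent to $G$ satisfying this count. Hence it suffices to show that whenever $G$ satisfies the count, $S$ admits an infinitesimally rigid panel-hinge realization.

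To construct such a realization I would assign to each body a hyperplane in $\mathbb{R}^d$, which determines each hinge as the codimension-two intersection of its two incident panels, and then argue that a sufficiently generic choice of these hyperplanes makes the panel-hinge rigidity matrix attain the full body-and-hinge rank. The natural route is induction on $|V(G)|$ via a reduction operation (contracting or splitting off a body) that is compatible with the tree-packing count, building the panels one at a time while preserving maximality of the rank; a matroidal formulation, in the spirit of Whiteley's $2$-plane matroid mentioned above, is the cleanest bookkeeping device for tracking independence of the hinge constraints through the induction.

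The main obstacle is exactly the panel constraint: all hinges incident to a given body are forced to lie in a common hyperplane, a positive-codimension (non-generic) condition on the configuration of hinges. Consequently one cannot simply quote genericity of the body-and-hinge rigidity matrix, since this coplanarity could in principle drop the rank. The heart of the proof is to show that it does not, i.e.\ that when the count is tight the specialization to coplanar hinges preserves the rank. In the plane this is precisely the statement that forcing the pins incident to each body to be collinear costs nothing, and I expect it is best handled by modelling each panel as a cone over its hinge points and comparing the rigidity of the resulting graph framework with that of the rod configuration, which is the device developed in the remainder of this paper; the higher-dimensional rank-preservation step is where the essential difficulty of the general theorem resides.
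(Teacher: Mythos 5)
The paper does not prove Theorem \ref{molecular}: it is stated as the Tay--Whiteley conjecture and its proof is attributed to Jackson and Jord\'an \cite{JacJor08} for $d=2$ and to Katoh and Tanigawa \cite{KatTan11} in general, so there is no internal argument to compare yours against. Judged on its own terms, your proposal gets the framing right --- the panel-hinge-to-body-and-hinge direction is indeed immediate, and reducing the converse via the Tay--Whiteley tree-packing characterization (that some realization is infinitesimally rigid iff $(D-1)G$ contains $D$ edge-disjoint spanning trees, $D=\binom{d+1}{2}$) is the correct first move, and is how the published proofs begin. One caveat: your reduction silently assumes each hinge is incident to exactly two bodies, so that $S$ is the dual of a multigraph $G$; that is consistent with the body-and-hinge setting of the theorem, but worth stating, since the paper's broader concern is incidence geometries where more than two lines meet at a point, and there the tree-packing count is not the right invariant (Theorem \ref{Whiteley5.4} is).

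The genuine gap is that the step you label ``the heart of the proof'' is the entire theorem, and you do not supply it. Showing that the coplanarity specialization --- forcing all hinges of each body into a common hyperplane, a positive-codimension condition under which Zariski-genericity of the body-and-hinge rank cannot be invoked --- does not drop the rank is exactly what remained open for over twenty-five years; the inductive splitting-off scheme you gesture at is the skeleton of the Katoh--Tanigawa argument, but the content lies in verifying that the contraction/splitting operations can always be performed compatibly with the count \emph{and} realized geometrically by panels, which occupies most of their paper. Your appeal to the cone-graph device of the present paper also cannot close the gap as stated: Theorem \ref{Main} presupposes that $S^C$ admits a regular proper string configuration, and in the planar case it recovers the molecular conjecture only under that hypothesis, so using it here without establishing regularity of the needed realization would be circular. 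In short, the proposal is an accurate roadmap with the destination correctly marked, but the road itself is not built.
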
 


The duals of panel-hinge frameworks in $\mathbb{R}^3$ provide a model for the motions of molecules. Therefore the molecular conjecture has implications for the rigidity of molecules. We refer to Whiteley for further explanation of this connection \cite{WhiteleyMolecular}.

In this article, we will focus on infinitesimal rigidity of rod configurations. Rod configurations are Euclidean realizations of rank two incidence geometries as points and straight line segments behaving as rigid bodies. The infinitesimal rigidity of rod configurations can be modeled by panel-hinge frameworks in the plane. Hence Theorem \ref{molecular} gives a way of determining the rigidity of sufficiently generic rod configurations that have two rods going through each point. Such rod configurations are realizations of rank two incidence geometries that are duals of graphs.

Whiteley proved the molecular conjecture for \textit{independent} body-and-hinge frameworks in the plane \cite{Whiteley89}. Whiteley's result also applies to independent body-and-joint frameworks, which realize rank two incidence geometries that are not duals of graphs. However, since it only applies to rank two incidence geometries that have realizations as independent body-and-joint frameworks, it does not imply the molecular conjecture, even in the plane.

Jackson and Jord\'an proved the molecular conjecture for body-and-hinge frameworks in the plane. They refer to such frameworks as body-and-pin frameworks \cite{JacJor08}. Katoh and Tanigawa proved the molecular conjecture in arbitrary dimension \cite{KatTan11}. 

There are other results in the literature concerning rigidity, points and lines that are not directly related to the rigidity of rod configurations. Jackson and Owen characterized the generic rigidity of planar frameworks consisting of points and lines in which distances between points and points, points and lines, and angles between lines can be fixed \cite{JacOwen2016}. They study the problem by representing the points and lines as vertices in a graph. The distances and angles that are preserved are represented as edges. In our setting, the distance between points and the lines they lie on is zero, which is a non-generic situation to which the tools developed by Jackson and Owen developed cannot be directly applied.

Jackson and Jord\'an characterize the frameworks of graphs in the plane that remain rigid when three given points are placed on a line \cite{JacJor05}. Determining which graph frameworks remain rigid when a set of points is placed on a line is different to the problem that we are interested in, so their result is not applicable in our situation. Jackson and Jord\'ans result was extended to an arbitrary number of points placed on a line by Eftekhari, Jackson, Nixon, Schulze, Tanigawa and Whiteley \cite{Frameworks2019}. 

Raz studied the combinatorial rigidity of graphs by considering the intersections of lines in $\mathbb{R}^3$ \cite{Raz2017}. Raz provided a model for rigidity of graph frameworks in the plane in terms of the incidence geometry of points and lines in $\mathbb{R}^3$. Her results concern rigidity of planar graph frameworks, not of rod configurations.

In this article we provide a way of determining the infinitesimal rigidity of rod configurations. We do this by relating the infinitesimal rigidity of a sufficiently generic rod configuration realizing a rank two incidence geometry to the generic rigidity of a graph associated to the incidence geometry. We call this graph the cone graph of the incidence geometry. 

Our result extends Whiteley's version of the molecular conjecture to body-and-joint frameworks that are not necessarily independent. In the special case that the incidence geometry defining the body-and-joint framework is the dual of a graph,  our result is essentially the planar case of Theorem \ref{molecular}.


\section{Preliminaries}

A \textit{framework} $\rho$ of a graph $G=(V,E)$ in $\mathbb{R}^d$ is an assignment of a point $\rho(v) \in \mathbb{R}^d$ to each vertex $v \in V$.

A \textit{continuous motion} of a framework of $G$ in $\mathbb{R}^d$ is a continuous motion of the points that preserves the distance between any two vertices $u$ and $v$ such that $(u,v) \in E$. The continuous Euclidean motions of $\mathbb{R}^d$ are continuous motions of any framework. We will refer to these motions as the \textit{trivial} motions of the framework. If the trivial motions are the only continuous motions of the framework, we say that the framework is (continuously) \textit{rigid}, otherwise we say that it is (continuously) \textit{flexible}.

An \textit{infinitesimal motion} of a framework $\rho$ of a graph $G$ in $\mathbb{R}^d$ is an assignment of a vector $m(v) \in \mathbb{R}^d$ to each vertex $v \in V$ such that $(\rho(v)-\rho(w))(m(v)-m(w))=0$ whenever $(v,w) \in E$. The linear part of a trivial motion gives a trivial infinitesimal motion. If the trivial infinitesimal motions are the only infinitesimal motions of the framework, then we say that the framework is \textit{infinitesimally rigid}, otherwise we say that it is \textit{infinitesimally flexible}. Furthermore, infinitesimal rigidity implies continuous rigidity \cite{Gluck}.

We say that a framework of a graph $G$ in $\mathbb{R}^d$ is \textit{generic} if the set of coordinates $\{\rho(v)\}_{v \in V}$ is algebraically independent over $\mathbb{Q}$. In general, continuous and infinitesimal rigidity depend on the framework but, by the following lemma, this is not the case for generic frameworks.

\begin{lemma}[\cite{Gluck,Graphs and Geometry}]
Let $G=(V,E)$ be a graph. If there is an infinitesimally rigid framework of $G$ in $\mathbb{R}^d$, then every generic framework of $G$ in $\mathbb{R}^d$ is rigid.
\label{GenericRigidity}
\end{lemma}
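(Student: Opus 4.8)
The plan is to work with the rigidity matrix $R(\rho)$ of a framework $\rho$, the $|E| \times d|V|$ matrix whose row indexed by an edge $(v,w)$ carries the block $\rho(v)-\rho(w)$ in the columns associated to $v$, the block $\rho(w)-\rho(v)$ in the columns associated to $w$, and zeros elsewhere. By definition the infinitesimal motions of $\rho$ are exactly the vectors in $\ker R(\rho)$, so $\rho$ is infinitesimally rigid precisely when $\dim \ker R(\rho)$ equals the dimension of the space of trivial infinitesimal motions. When the points of $\rho$ affinely span $\mathbb{R}^d$ this dimension equals $\binom{d+1}{2}$, and since the trivial motions always lie in $\ker R(\rho)$, infinitesimal rigidity of such a framework is equivalent to $R(\rho)$ attaining the maximal possible rank $d|V| - \binom{d+1}{2}$.

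First I would record that the entries of $R(\rho)$ are linear forms in the coordinates $\{\rho(v)\}_{v \in V}$ with integer coefficients. Consequently every $k \times k$ minor of the symbolic rigidity matrix is a polynomial in these coordinates with rational coefficients, and $\operatorname{rank} R(\rho) \ge k$ exactly when some such minor does not vanish at $\rho$. Let $\rho_0$ be the given infinitesimally rigid framework and set $r = \operatorname{rank} R(\rho_0) = d|V| - \binom{d+1}{2}$. Since $R(\rho_0)$ has rank $r$, it possesses a nonzero $r \times r$ minor, so the corresponding minor $p$ of the symbolic rigidity matrix satisfies $p(\rho_0) \neq 0$; in particular $p$ is not the zero polynomial.

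Now let $\rho$ be any generic framework. By definition its coordinates are algebraically independent over $\mathbb{Q}$, so no nonzero polynomial with rational coefficients can vanish at them; hence $p(\rho) \neq 0$ and $\operatorname{rank} R(\rho) \ge r$. As a generic framework affinely spans $\mathbb{R}^d$, the bound $\operatorname{rank} R(\rho) \le d|V| - \binom{d+1}{2} = r$ holds as well, so $\operatorname{rank} R(\rho) = r$ and $\rho$ is infinitesimally rigid. Finally, infinitesimal rigidity implies continuous rigidity, so every generic framework is rigid, as claimed.

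The step I expect to require the most care is the identification of infinitesimal rigidity with maximality of the rank of $R(\rho)$: one must verify that $\rho_0$ and every generic framework affinely span $\mathbb{R}^d$, so that the trivial motion space has the constant dimension $\binom{d+1}{2}$ and the uniform bound $\operatorname{rank} R \le d|V| - \binom{d+1}{2}$ applies. The small cases $|V| \le d$ should be handled separately, but there the statement is either vacuous or immediate. Everything else is a standard combination of the semicontinuity of matrix rank and the defining property of generic coordinates.
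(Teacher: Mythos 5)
Your proof is correct: it is the standard semicontinuity-of-rank argument (a nonvanishing maximal minor of the symbolic rigidity matrix is a nonzero polynomial over $\mathbb{Q}$, hence cannot vanish at algebraically independent coordinates), which is exactly the argument in the sources cited for this lemma; the paper itself states the result with a citation and gives no proof of its own. The one point needing care, the identification of infinitesimal rigidity with $\operatorname{rank} R(\rho) = d|V| - \binom{d+1}{2}$ and the affine-spanning hypothesis behind it, is correctly flagged and correctly dispatched for the degenerate cases $|V| \le d$.
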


Furthermore, continuous rigidity and infinitesimal rigidity are equivalent for generic frameworks \cite{Roth}. It therefore makes sense to say that a graph $G$ is \textit{rigid} in $\mathbb{R}^d$ if every generic framework of $G$ in $\mathbb{R}^d$ is infinitesimally rigid or, equivalently, continuously rigid. Otherwise $G$ is flexible in $\mathbb{R}^d$.

A graph $G$ is \textit{minimally rigid} in $\mathbb{R}^d$ if $G=(V,E)$ is rigid in $\mathbb{R}^d$, but for any edge $e \in E$, the graph generated by $E \setminus \{e\}$ is flexible in $\mathbb{R}^d$. The following theorem gives a combinatorial characterization of minimally rigid graphs in the plane.

\begin{theorem}[Geiringer-Laman \cite{PolGei1927,Laman}]
	Let $G=(V,E)$ be a graph. Then $G$ is minimally rigid in $\mathbb{R}^2$ if and only if
	\begin{itemize}
		\item $|E|=2|V|-3$, and 
		\item $|E'| \leq 2|V'|-3$ for all subsets $E' \subseteq E$, where $V'$ is the support of $E'$.
	\end{itemize}
	\label{Geiringer-Laman}
\end{theorem}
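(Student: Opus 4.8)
The plan is to translate the statement into linear algebra and then reduce the hard direction to an inductive construction. Recall that the infinitesimal motions of a framework $\rho$ of $G=(V,E)$ in $\mathbb{R}^2$ are the solutions $m\colon V\to\mathbb{R}^2$ of the homogeneous linear system consisting of one equation $(\rho(v)-\rho(w))\cdot(m(v)-m(w))=0$ for each edge $(v,w)\in E$. Collecting these equations gives a matrix $R(G,\rho)$ with $|E|$ rows and $2|V|$ columns, the rigidity matrix, whose kernel is the space of infinitesimal motions. When $\rho$ is not contained in a line the trivial motions form a $3$-dimensional subspace of this kernel, so $\rho$ is infinitesimally rigid precisely when $\operatorname{rank} R(G,\rho)=2|V|-3$, and by Lemma~\ref{GenericRigidity} together with the equivalence of the generic notions of rigidity this is a property of $G$ alone. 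Minimal rigidity then says exactly that for generic $\rho$ the rows of $R(G,\rho)$ are linearly independent and $\operatorname{rank} R(G,\rho)=|E|=2|V|-3$.

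With this dictionary, necessity of the two conditions is immediate. If $G$ is minimally rigid, then $|E|=\operatorname{rank} R(G,\rho)=2|V|-3$ for generic $\rho$. For any $E'\subseteq E$ with support $V'$, the corresponding rows of $R(G,\rho)$ are a subset of an independent set, hence independent; but these rows have nonzero entries only in the $2|V'|$ columns indexed by $V'$, and the rank of the rigidity matrix of any framework on $|V'|$ vertices is at most $2|V'|-3$. Therefore $|E'|\le 2|V'|-3$.

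For sufficiency I would use the Henneberg construction. The plan is to show that every graph satisfying the two conditions can be built from a single edge by repeatedly applying the two Henneberg moves: the $0$-extension, which adds a new vertex joined to two existing vertices, and the $1$-extension, which deletes an edge $\{a,b\}$ and adds a new vertex $v$ joined to $a$, $b$ and to some third vertex $c$. Both moves preserve the counting conditions by inspection, so it suffices to prove two things: first, that each move sends a generically rigid graph to a rigid graph; and second, that every graph on at least three vertices satisfying the conditions arises from a smaller such graph by one of these moves, that is, admits an inverse move. The first point is where Lemma~\ref{GenericRigidity} does the work, since to see that the extended graph is generically rigid it suffices to exhibit a single infinitesimally rigid framework of it. For the $0$-extension one places the new vertex off the line through its two neighbours, so that its two edges pin its two degrees of freedom. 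For the $1$-extension one places $v$ on the line through $a$ and $b$; then $v-a$ and $v-b$ are both parallel to $a-b$, so after eliminating $m(v)$ the two constraints from $\{v,a\}$ and $\{v,b\}$ reduce exactly to the deleted constraint from $\{a,b\}$, an infinitesimally rigid framework of the smaller graph extends to the larger one, and the edge $\{v,c\}$ fixes the remaining degree of freedom of $v$.

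The hard part will be the purely combinatorial step of producing an inverse move. Here I would first argue, by a degree count against $|E|=2|V|-3$, that such a graph on at least three vertices has a vertex of degree two or three. A degree-two vertex is removed by an inverse $0$-extension and the counting conditions clearly survive. A degree-three vertex $v$ with neighbours $a,b,c$ must instead be removed by an inverse $1$-extension, deleting $v$ and adding back one of $\{a,b\}$, $\{a,c\}$, $\{b,c\}$; the difficulty is that not every choice preserves the sparsity condition, because adding an edge can overload a \emph{tight} subset, one with $|E'|=2|V'|-3$. The crux of the argument is to show that at least one of the three edges can always be added without violating sparsity, which I expect to handle by analysing how the maximal tight sets through the pairs in $\{a,b,c\}$ can intersect and ruling out the possibility that all three candidate edges are simultaneously blocked. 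Once this combinatorial lemma is in place, the induction closes and sufficiency follows.
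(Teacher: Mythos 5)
The paper does not prove this statement: Theorem~\ref{Geiringer-Laman} is quoted as a classical result and attributed to Pollaczek-Geiringer and Laman, so there is no internal proof to compare against. Your outline follows the standard Henneberg-construction proof of that classical theorem. The necessity direction is correct as written, and your treatment of the two extension moves is the right one: the $0$-extension argument is complete, and for the $1$-extension the collinear placement of $v$ together with Lemma~\ref{GenericRigidity} does indeed reduce the two new constraints to the deleted one and lets the edge $\{v,c\}$ absorb the remaining degree of freedom of $v$ (you should note that $c$ must be chosen off the line through $a$ and $b$, and that the existence of a vertex of degree $2$ or $3$ needs both the handshake count and the sparsity condition, the latter to rule out vertices of degree $0$ or $1$).

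The genuine gap is the step you yourself flag as the crux and then defer: the claim that for a degree-three vertex $v$ with neighbours $a,b,c$, at least one of the pairs $\{a,b\},\{a,c\},\{b,c\}$ is a non-edge whose addition to $G-v$ preserves the sparsity count. This is where essentially all of the difficulty of Laman's theorem lives, and ``analysing how the maximal tight sets can intersect'' is a plan, not a proof. To close it you need (i) the observation that if all three pairs were already edges then $\{v,a,b,c\}$ would span a $K_4$ plus the three edges at $v$, violating sparsity at $v$'s closed neighbourhood; (ii) the lemma that if $X$ and $Y$ are tight vertex sets (meaning the induced edge sets satisfy $i(X)=2|X|-3$) with $|X\cap Y|\ge 2$, then $X\cup Y$ is tight, which follows from the supermodular inequality $i(X)+i(Y)\le i(X\cup Y)+i(X\cap Y)$ together with sparsity applied to $X\cap Y$ and $X\cup Y$; and (iii) the final contradiction: if every non-edge pair among $\{a,b,c\}$ is blocked by a tight set in $G-v$ containing it, these blockers pairwise share at most one vertex or else merge, and a careful count of the union $U$ of the blockers together with $v$ and its three edges yields $i(U\cup\{v\})>2|U\cup\{v\}|-3$, contradicting sparsity of $G$. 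The case analysis in (iii) (one, two, or three blockers, and whether some pairs are already edges) is routine but not automatic, and until it is written out the sufficiency direction is not established.
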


We say that the set of edges $E$ of a graph $G=(V,E)$ is \textit{independent} if $|E'| \leq 2|V'|-3$ for all subsets $E' \subseteq E$, or, equivalently, if $E$ is independent in the generic two-dimensional rigidity matroid. A set of edges that is not independent is dependent. We refer to the book by Graver, Servatius and Servatius for further background on matroids and their use in combinatorial rigidity theory \cite{GraSerSer}. We say that a graph is independent if its set of edges is independent. 

We say that a subset of edges $E' \subseteq E$ is \textit{maximally independent in $E$} if $E'$ is independent, but $E' \cup \{e\}$ is dependent for any $e \in E \setminus E'$. We say that the subgraph $G'=(V',E')$ of $G=(V,E)$ generated by a maximally independent subset $E'$ of $E$ is \textit{maximally independent in $G$}. Note that an independent graph is a maximally independent subgraph of itself, and that a maximally independent subgraph of a rigid graph is minimally rigid.

A graph $G=(V,E)$ is a \textit{rigidity circuit} if its set of edges is minimally dependent, i.e. its set of edges is dependent, but $E \setminus \{e\}$ is independent for all edges $e \in E$. One example of a rigidity circuit is $K_4$. Clearly, all rigidity circuits satisfy $|E|=2|V|-2$, however not all graphs that satisfy $|E|=2|V|-2$ are rigidity circuits. See Figure \ref{Not_circuit} for an example.

Note that any dependent set of edges contains a minimally dependent set of edges, which generates a rigidity circuit. Another fact about rigidity circuits that we will use later on is that any vertex in a rigidity circuit has degree at least three.

\begin{figure}
	\begin{center}
	
	\begin{tikzpicture}
	\filldraw[black] (0,0) circle (2pt);
	\filldraw[black] (2,0) circle (2pt);
	\filldraw[black] (0,-1.5) circle (2pt);
	\filldraw[black] (2,-1.5) circle (2pt);
	\filldraw[black] (1,1) circle (2pt);
	
	\draw[thick, black] (0,0) -- (2,0) -- (2, -1.5) -- (0, -1.5) -- (0,0) -- (2, -1.5);
	\draw[thick, black] (0,0) -- (1,1) -- (2,0) -- (0, -1.5);
	\end{tikzpicture}
	
	\caption{A graph with $|V|=5$ and $|E|= 2|V|-2=8$ which is not a rigidity circuit.}	
	\label{Not_circuit}
	\end{center}
\end{figure}
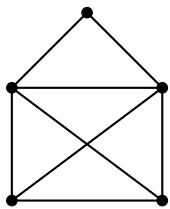

\subsection{Incidence geometries, rod and string configurations}

In the present paper, we are concerned with geometric realizations of rank two incidence geometries in $\mathbb{R}^2$. Rank two incidence geometries are also known as hypergraphs. We will only consider incidence geometries of rank two, so we will not explicitly state the rank of the incidence geometry. We will also assume that the incidence geometries we consider are connected. If the incidence geometry is not connected, then the realizations that we are interested in will be flexible, as they have two components which can move independently.

Let $S=(P,L,I)$ be an incidence geometry. A \textit{linear realization} $\rho$ of $S$ in $\mathbb{R}^2$ is an assignment of a line in $\mathbb{R}^2$, with slope $f_i$ and $y$-intercept $h_i$, to each element $\ell_i \in L$, and an assignment of point coordinates $(x_j,y_j)$ to each element $p_j \in P$ such that if $(p_j,\ell_i) \in I$, then 

\begin{equation}
f_ix_j + y_j + h_i =0.
\label{redrawings}
\end{equation}

A linear realization is \textit{proper} if distinct points are assigned distinct coordinates, and \textit{trivial} if all points are assigned the same coordinates. Not all incidence geometries have proper linear realizations in $\mathbb{R}^2$. For example, it is well-known that the Fano plane does not.

Given a linear realization $\rho$ of $S=(P,L,I)$ in the plane, we can fix its line slopes $\{f_i\}$, and solve the $I$ equations of the form (\ref{redrawings}) for $x_j$, $y_j$ and $h_i$, to determine a space of linear realizations of $S$ with the same line slopes as the linear realization $\rho$. Call this space the space of \textit{parallel redrawings} of the linear realization $\rho$. We call the $|I| \times (|L| + 2|P|)$-matrix encoding the linear equations of the form $(\ref{redrawings})$ the \textit{concurrence geometry matrix}, and denote it by $M(S, \rho)$. 

The translations and the dilation generate a three-dimensional space of parallel redrawings of any linear realization $\rho$ of an incidence geometry in the plane such that at least two points are realized with distinct coordinates. Therefore, the maximum rank of $M(S, \rho)$ is $|L|+2|P|-3$, as the kernel of $M(S, \rho)$ is at least three-dimensional. The concurrence geometry matrix has $|I|$ rows, so if the rows of $M(S, \rho)$ are independent for a proper linear realization $\rho$, then any subset $J \subseteq I$ with support $Q \times M \subseteq P \times L$ must satisfy $|J| \leq |M| + 2|Q|-3$. See \cite{SerWhi} for more about parallel redrawings and how they can be applied.

Let $S=(P,L,I)$ be an incidence geometry. We say that a subset $I' \subseteq I$ is \textit{sharply independent} if 
\begin{equation}
|J| \leq |M|+2|Q|-3
\label{inequality}
\end{equation}
for any subset $J \subseteq I'$, where $Q \times M \subseteq P \times L$ is the support of $J$.

A \textit{rod configuration} realizing an incidence geometry $S=(P,L,I)$ is a linear realization of $S$ where each line is considered to be an infinitesimally rigid body.

More specifically, an infinitesimal motion of a rod configuration realizing an incidence geometry $S=(P,L,I)$ in $\mathbb{R}^2$ is an assignment of a vector $v \in \mathbb{R}^2$ to each point $p \in P$ so that restricted to each rod $\ell \in L$, the vectors are linearizations of rotations or translations of the rod. We explore infinitesimal rigidity of rod configurations further in another article \cite{rods}.

A \textit{string configuration} realizing an incidence geometry $S=(P,L,I)$ in $\mathbb{R}^2$ is given by a framework in $\mathbb{R}^2$ of a graph, with vertex set $V=P$ and such that for every line $\ell \in L$ there is a tree on the vertices representing points incident to $\ell$, so that the tree on the vertices representing points incident to $\ell$ is collinear. 

Note that a string configuration of $S$ requires a line slope assigned to every line in $L$, and point coordinates for every point in $P$, i.e. a linear realization of $S$. We can therefore for example say that a string configuration is proper if the underlying linear realization is proper. 

String configurations come with a notion of rigidity, namely the notion of rigidity for frameworks of graphs. The infinitesimal motions of a string configuration in the plane are in one-to-one correspondence with its parallel redrawings, see \cite{Crapo84,Crapo85,Whiteley88}. Using this fact, Whiteley proved the following theorem:

\begin{theorem}[Whiteley \cite{Whiteley89}]
Let $S=(P,L,I)$ be an incidence geometry. Then $S$ has a realization as a minimally infinitesimally rigid string configuration in the plane if and only if
\begin{itemize}
	\item $I$ is sharply independent, and
	\item $|I| = |L|+2|P|-3$.
\end{itemize}
\label{Whiteley5.2}
\end{theorem}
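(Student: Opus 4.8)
The plan is to translate the statement about string configurations into a statement about the rank of the concurrence geometry matrix $M(S,\rho)$, using the cited correspondence between infinitesimal motions of a string configuration and parallel redrawings of its underlying linear realization. Recall that a string configuration of $S$ replaces each line $\ell \in L$ by a spanning tree on the set $P_\ell$ of its incident points, so the underlying graph $G$ has $|P|$ vertices and $\sum_{\ell}(|P_\ell| - 1) = |I| - |L|$ edges. A framework of $G$ in the plane is infinitesimally rigid exactly when its space of infinitesimal motions is $3$-dimensional (only the trivial motions), and by the correspondence this space has the same dimension as the space of parallel redrawings of $\rho$, namely $|L|+2|P| - \mathrm{rank}\, M(S,\rho)$. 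Since the kernel of $M(S,\rho)$ always contains the $3$-dimensional space spanned by the two translations and the dilation, $\mathrm{rank}\, M(S,\rho) \le |L| + 2|P| - 3$, and the string configuration is infinitesimally rigid if and only if this bound is attained.

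With this dictionary in hand I would argue the forward direction first. If $S$ has a minimally infinitesimally rigid string configuration with realization $\rho$, then its graph $G$ is minimally rigid, so by Theorem~\ref{Geiringer-Laman} it has exactly $2|P|-3$ edges; since $G$ has $|I|-|L|$ edges this gives $|I| = |L| + 2|P| - 3$. Infinitesimal rigidity forces $\mathrm{rank}\, M(S,\rho) = |L|+2|P|-3 = |I|$, so the rows of $M(S,\rho)$ are independent. The sharp independence of $I$ is then exactly the implication already recorded before the statement: for any $J \subseteq I$ with support $Q \times M$, the rows of $J$ restricted to the columns indexed by $Q$ and $M$ form the concurrence matrix of the restricted incidence geometry, whose kernel is at least $3$-dimensional, so $|J| \le |M| + 2|Q| - 3$.

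For the converse, assume $I$ is sharply independent and $|I| = |L|+2|P|-3$. The heart of the matter is to produce a proper linear realization $\rho$ at which the $|I|$ rows of $M(S,\rho)$ are independent; given such a $\rho$ we immediately get $\mathrm{rank}\, M(S,\rho) = |I| = |L|+2|P|-3$, so the associated string configuration (choosing any spanning trees) is infinitesimally rigid, and since its graph has exactly $2|P|-3 = |I|-|L|$ edges it is in fact minimally rigid. To build $\rho$ I would show that the sharp independence count characterizes generic row-independence of $M(S,\rho)$. The inequality direction (generic row-independence implies the count) is the kernel argument above; for the reverse I would verify that the function $g(Q,M) = 2|Q| + |M| - 3$ is submodular, so that the sharp independence condition defines the independent sets of a count matroid on $I$, and then argue by induction on $|I|$ that this count matroid coincides with the linear matroid of a generic $M(S,\rho)$, using inductive moves that add a point, a line, or an incidence while preserving both the count and generic row-independence.

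The main obstacle is precisely this last step: showing that the combinatorial count is tight, i.e. that a sharply independent incidence set can always be realized so that the rows of the concurrence matrix are genuinely linearly independent. As with the hard direction of the Geiringer--Laman theorem, passing from the sparsity count to actual linear independence is where the real content lies; one must control the geometry enough to guarantee a proper realization (so that the dilation acts nontrivially and the points stay distinct) and to ensure that each inductive move can be carried out without creating an unexpected dependency among the incidence rows.
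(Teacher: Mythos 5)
This statement is quoted from Whiteley \cite{Whiteley89}; the paper does not prove it, so there is no in-paper proof to compare against. Judged on its own terms, your proposal sets up the right dictionary: the identification of infinitesimal motions of the string configuration with parallel redrawings of the underlying linear realization, the edge count $|I|-|L|$ for the union of spanning trees, and the equivalence of infinitesimal rigidity with $\mathrm{rank}\, M(S,\rho)=|L|+2|P|-3$. The forward direction is essentially complete (minimality gives row-independence of the rigidity matrix, hence $|I|-|L|=2|P|-3$, and independence of the rows of $M(S,\rho)$ restricted to any subset $J$ gives the count via the three-dimensional trivial kernel, modulo the degenerate case where the support of $J$ contains fewer than two distinct points, which the paper's own statement of the count also glosses over).

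The gap is in the converse, and you have named it yourself: everything hinges on showing that a sharply independent incidence set can be realized by a proper linear realization at which the corresponding rows of $M(S,\rho)$ are actually linearly independent. Submodularity of $g(Q,M)=2|Q|+|M|-3$ shows that the counts define a matroid on $I$, but it says nothing about whether that count matroid coincides with the linear matroid of a generic concurrence geometry matrix; asserting that an induction ``adding a point, a line, or an incidence'' will work is a statement of intent, not an argument, and specifying and verifying those inductive moves (or, as Whiteley does, deriving the result from matroid union techniques for the $2$-plane matroid) is the entire content of the theorem. As written, the proposal proves only the easy direction and a framework for the hard one, so it does not constitute a proof of the stated equivalence.
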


In fact, any string configuration realizing $S$ in $\mathbb{R}^2$ with an underlying linear realization $\rho$ that maximizes the rank of the concurrence geometry matrix $M(S, \rho)$ is infinitesimally rigid.  

\begin{remark}
In this setting the infinitesimal rigidity is determined only by the line slopes, so the choice of tree on the vertices representing the points incident to a line $\ell$ does not affect the rigidity of the string configuration. 
\end{remark}

\subsection{Body-and-joint frameworks and their rigidity properties}

Let $S=(P,L,I)$ be an incidence geometry. A planar \textit{body-and-joint framework} consists of an incidence geometry $S$ in $\mathbb{R}^2$ and an assignment of a point in $\mathbb{R}^2$ to every point in $P$. We say that the body-and-joint framework realizes $S$. A body-and-joint framework in $\mathbb{R}^2$ is \textit{infinitesimally rigid} if each line can be replaced by a minimally rigid framework of a graph with vertex set including the points incident to the line so that the entire framework is infinitesimally rigid, as a graph framework. 

Similarly, a body-and-joint framework realizing $S$ in $\mathbb{R}^2$ is \textit{minimally infinitesimally rigid} if it is possible to replace each line by a minimally rigid framework of a graph with vertex set including the points incident to the line so that the entire framework is minimally infinitesimally rigid.

A body-and-joint framework realizing $S$ in $\mathbb{R}^2$ is \textit{independent} if each line can be replaced by a minimally rigid graph with vertex set including the points incident to the line so that the resulting graph is independent. 

Whiteley proved the following result characterizing which incidence geometries have realizations as minimally infinitesimally rigid body-and-joint frameworks in $\mathbb{R}^2$. 

\begin{theorem}[Whiteley \cite{Whiteley89}]
Given an incidence geometry $S=(P,L,I)$ the following are equivalent:
	\begin{enumerate}
	\item $S$ has an independent (minimally infinitesimally rigid) body and joint realization in $\mathbb{R}^2$.
	\item $S$ satisfies $2|I| \leq 3|L| + 2|P| -3$, ($2|I| = 3|L| + 2|P| -3$) and for every subset of bodies with the induced subgraph of attached joints $2|I'| \leq 3|L'|+2|P'|-3$.
	\item $S$ has an independent (minimally infinitesimally rigid) body and joint realization in $\mathbb{R}^2$ such that each body has all its joints collinear.
	\end{enumerate}
	\label{Whiteley5.4}
\end{theorem}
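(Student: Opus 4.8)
The plan is to prove the cycle of implications $(3)\Rightarrow(1)\Rightarrow(2)\Rightarrow(3)$, treating all three conditions as statements about the rank of a single \emph{body-and-joint rigidity matrix} $R(S,\rho)$. First I would set this matrix up: assign to each body $\ell_i$ three coordinates recording an infinitesimal rigid motion $x\mapsto \omega_i x^{\perp}+t_i$, and to each joint $p_j$ two coordinates $u_j$; then for every incidence $(p_j,\ell_i)\in I$ record the two scalar equations $\omega_i p_j^{\perp}+t_i-u_j=0$. This produces a matrix with $2|I|$ rows and $3|L|+2|P|$ columns, and the body-and-joint framework is independent (respectively minimally infinitesimally rigid) exactly when these rows are linearly independent (respectively independent with a three-dimensional solution space, corresponding to the trivial motions). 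As in Lemma~\ref{GenericRigidity}, a generic realization maximizes the rank, so the geometric conditions in (1) depend only on $S$ once we pass to generic $\rho$.

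The implication $(3)\Rightarrow(1)$ is immediate, since a realization with collinear joints is in particular a realization. For $(1)\Rightarrow(2)$ I would argue by the standard trivial-motion count: restricting attention to any subfamily of bodies $L'$ together with the incident joints $P'$ and the induced incidences $I'$, the corresponding submatrix always has a kernel of dimension at least three, coming from the global rigid motions of the plane acting on that subframework. Hence its rank is at most $3|L'|+2|P'|-3$, and if the rows of $R(S,\rho)$ are independent then the $2|I'|$ rows of the submatrix are independent, forcing $2|I'|\le 3|L'|+2|P'|-3$. Taking $L'=L$ and using independence (respectively minimal rigidity) yields $2|I|\le 3|L|+2|P|-3$ (respectively equality), which is exactly the combinatorial condition in (2).

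The substance of the theorem is $(2)\Rightarrow(3)$: from the pure count I must manufacture an independent realization in which every body has its joints collinear. I would realize each body as a generic rigid frame and place its joints on a generic line, then analyze $R(S,\rho)$ for this collinear realization. The useful structure is that along a body's line an infinitesimal rigid motion has \emph{constant} tangential component and \emph{affine} normal component; thus the constraints coming from a body split into a tangential part, which is precisely the collinearity constraint appearing in string configurations (and, via the parallel-redrawing correspondence and Theorem~\ref{Whiteley5.2}, is controlled by the concurrence geometry matrix), and a normal part handled by the genericity of the off-line frame vertices. The count condition (2), together with its hereditary form on every $I'$, is exactly what keeps all of these constraints independent, and I would use it to show the maximal rank is attained.

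The step I expect to be the main obstacle is confirming that collinearity --- a non-generic, positive-codimension restriction --- does not drop the rank of $R(S,\rho)$ below the combinatorial count. A maximal minor that is nonzero for generic $\rho$ could in principle vanish once a body's joints are forced onto a line, so I cannot simply invoke generic rigidity; I need genericity \emph{within} the restricted family of collinear realizations. The cleanest route is to exploit the tangential/normal splitting above: the dangerous degeneracies all live in the tangential (string) part, which Theorem~\ref{Whiteley5.2} and the parallel-redrawing correspondence already govern under a sharp-independence count, while the normal part is controlled by the generic off-line vertices. Reconciling the body-and-joint count $2|I|=3|L|+2|P|-3$ with the string count through this splitting, and checking that collinearity introduces no new dependency, is the technical heart of the argument. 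Once this is done, $(3)$ holds, and reading equalities throughout gives the minimally rigid version as well, closing the cycle $(1)\Rightarrow(2)\Rightarrow(3)\Rightarrow(1)$.
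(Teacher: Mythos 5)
First, note that the paper does not prove this statement: it is quoted from Whiteley \cite{Whiteley89}, and the only indication given of the argument is that the implication $2\Rightarrow 3$ is obtained by showing that the count in (2) forces the cone incidence geometry $S^C$ to have a realization as a minimally infinitesimally rigid string configuration, via Theorem \ref{Whiteley5.2}; such a string configuration is precisely a body-and-joint realization with all joints of each body collinear. Your overall architecture ($3\Rightarrow 1\Rightarrow 2\Rightarrow 3$), your rigidity-matrix setup, and your treatment of the two easy implications are fine, and your instinct that $2\Rightarrow 3$ should route through the tangential (parallel-redrawing) picture and Theorem \ref{Whiteley5.2} is exactly the right one.

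The genuine gap is that you never carry out the one step that constitutes the theorem. Two things are missing. First, Theorem \ref{Whiteley5.2} takes as hypothesis the sharp-independence count $|J|\le |M|+2|Q|-3$ on the incidences of a point-line geometry, whereas condition (2) is the count $2|I'|\le 3|L'|+2|P'|-3$ on $S$ itself; translating the latter (hereditarily, over all subsets) into the former for the cone incidence geometry $S^C$ is a nontrivial piece of combinatorial bookkeeping --- it is essentially the content of the second half of the proof of Lemma \ref{Construction} in this paper, where one must handle subsets of incidences that do not arise from subgraphs by adding and removing incidences while preserving the inequality. You assert that the count ``is exactly what keeps all of these constraints independent'' without performing this translation. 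Second, your proposed tangential/normal splitting does not by itself dispose of the obstacle you correctly identify, namely that collinearity is a positive-codimension specialization under which a generically nonzero maximal minor could vanish. Whiteley's route avoids this entirely: one never argues that a generic realization's rank survives specialization; instead one works from the start inside the class of collinear realizations, where infinitesimal motions are in bijection with parallel redrawings and Theorem \ref{Whiteley5.2} directly supplies a rank-maximizing linear realization of $S^C$. As written, your proof defers precisely this ``technical heart,'' so the implication $2\Rightarrow 3$ --- the substance of the theorem --- remains unproved.
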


The equivalence $1 \iff 3$ implies Theorem \ref{molecular} in the special case that the incidence geometry has a realization as a minimally infinitesimally rigid body-and-hinge framework, but Theorem \ref{Whiteley5.4} also applies to incidence geometries where more than two lines meet at a point.

We will now introduce some particular body-and-joint frameworks of an incidence geometry. Our goal is to relate the infinitesimal rigidity of rod configurations realizing $S$ to the rigidity of these body-and-joint frameworks.

Let $S=(P,L,I)$ be an incidence geometry. A \textit{cone graph} $G^C(S)$ of $S$ is a graph with a vertex for each point in $P$ and a vertex for each point in $L$, and the set of edges in $G^C(S)$ consists of edges from the vertex representing a line $\ell$ to all vertices representing points incident to $\ell$, and edges $\{(p,q)\}_{(q,\ell) \in I}$, where $p$ is some chosen vertex incident to $\ell$. We call the vertices that represent points in $P$ \textit{point vertices}, and the vertices representing lines in $L$ \textit{cone vertices}. We will refer to the subgraph of $G^C(S)$ generated by a cone vertex representing a line $\ell$ and the vertices representing the points $\{p_1, ... , p_k\}$ such that $(p_j, \ell) \in I$ as the \textit{cone of $\ell$}.

Note that the subgraph generated by point vertices representing points incident to the same line $\ell$ is a \textit{star graph}. We will refer to the vertex $p$ as the \textit{inner vertex} of the star graph. The cone of $\ell$ depends on the choice of $p$, so in general there are many cone graphs of an incidence geometry $S$. See Figure \ref{Cone_graph_rods} for an example of a cone graph of a rod configuration. 

\begin{figure}
	\begin{center}
	\begin{tabular}{cc}
	\begin{tikzpicture}
\filldraw[black] (0,0) circle (2pt);
\filldraw[black] (-1.5,-3) circle (2pt);
\filldraw[black] (1.5,-3) circle (2pt);
\filldraw[black] (0.5,-1) circle (2pt);
\filldraw[black] (-0.5,-1) circle (2pt);
\filldraw[black] (0,-3) circle (2pt);
\filldraw[black] (0,-1.5) circle (2pt);


\draw[thick] (0,0) -- (1.5,-3) -- (-1.5,-3)--(0,0);
\draw[thick] (-1.5,-3) -- (0.5,-1);




\end{tikzpicture}
	&
	\begin{tikzpicture}
\filldraw[black] (0,0) circle (2pt);
\filldraw[black] (-1.5,-3) circle (2pt);
\filldraw[black] (1.5,-3) circle (2pt);
\filldraw[black] (0.3,-1.1) circle (2pt);
\filldraw[black] (-0.25,-1.5) circle (2pt);
\filldraw[black] (0.3,-2.75) circle (2pt);
\filldraw[black] (0.2,-1.75) circle (2pt);

\filldraw[black] (1.5,-0.8) circle (2pt);
\filldraw[black] (-1.4,-1) circle (2pt);
\filldraw[black] (-0.4,-3.5) circle (2pt);
\filldraw[black] (0.5,-2) circle (2pt);

\draw[thick] (0,0) -- (0.25, -1.1);
\draw[thick] (0.25,-1.1)--(1.5,-3);

\draw[thick] (0,0) -- (-0.25,-1.5);
\draw[thick] (-0.25,-1.5)--(-1.5,-3);

\draw[thick] (0.3,-1.1) -- (0.2,-1.75);
\draw[thick] (-1.5,-3) -- (0.2,-1.75);

\draw[thick] (1.5,-0.8)--(0,0);
\draw[thick] (1.5,-0.8)--(0.3,-1.1);
\draw[thick] (1.5,-0.8)--(1.5,-3);

\draw[thick] (-1.4,-1)--(0,0);
\draw[thick] (-1.4,-1)--(-0.25,-1.5);
\draw[thick] (-1.4,-1)--(-1.5,-3);

\draw[thick] (0.5, -2) -- (-1.5,-3);
\draw[thick] (0.5, -2) -- (0.3,-1.1);
\draw[thick] (0.5, -2) -- (0.2,-1.75);

\draw[thick] (1.5,-3) -- (0.3,-2.75);
\draw[thick] (-1.4,-3) -- (0.3,-2.75);

\draw[thick] (-0.4,-3.5) -- (0.3,-2.75);
\draw[thick] (-0.4,-3.5) -- (1.5,-3);
\draw[thick] (-0.4,-3.5) -- (-1.5,-3);
\end{tikzpicture}
	\end{tabular}
	\caption{A rod configuration realizing an incidence geometry $S$ and a cone graph $G^C(S)$ of $S$.}	
	\label{Cone_graph_rods}
	\end{center}
\end{figure}
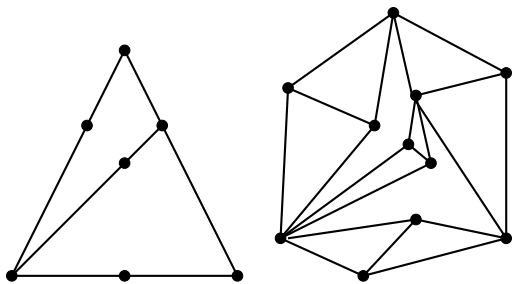

Given an incidence geometry $S=(P,L,I)$ define the \textit{cone incidence geometry} $S^C=(P^C,L^C,I^C)$ so that there is a point in $P^C$ for each point in $P$ and for each line in $L$, the lines in $L^C$ are the subsets of points that are collinear in $L$, and for any line $\ell \in L$ there is a line $\langle c_\ell,p \rangle$ incident to the cone vertex $c_\ell$ representing $\ell$ and any point $p$ incident to $\ell$.

For any incidence geometry $S$, there is a unique cone incidence geometry $S^C$. A string configuration realizing $S^C$ in $\mathbb{R}^2$ with the collinear tree on the point vertices representing points incident to a line $\ell$ chosen to be a star graph is a framework of a cone graph of $S$ in special position. See Figure \ref{Cone_graph_ng} for an example.

Linear realizations of $S^C$ give rise to rod configurations realizing $S$, and conversely. Furthermore, a string configuration realizing $S^C$ is infinitesimally rigid if and only if the rod configuration realizing $S$ that it gives rise to is infinitesimally rigid. Therefore analyzing the infinitesimal rigidity of string configurations realizing $S^C$ is of interest to us.

\begin{figure}
	\begin{center}
	\begin{tabular}{cc}
	\begin{tikzpicture}
\filldraw[black] (0,0) circle (2pt);
\filldraw[black] (-1.5,-3) circle (2pt);
\filldraw[black] (1.5,-3) circle (2pt);
\filldraw[black] (0.3,-1.1) circle (2pt);
\filldraw[black] (-0.25,-1.5) circle (2pt);
\filldraw[black] (0.3,-2.75) circle (2pt);
\filldraw[black] (0.2,-1.75) circle (2pt);

\filldraw[black] (1.5,-0.8) circle (2pt);
\filldraw[black] (-1.4,-1) circle (2pt);
\filldraw[black] (-0.4,-3.5) circle (2pt);
\filldraw[black] (0.5,-2) circle (2pt);

\draw[thick] (0,0) -- (0.25, -1.1);
\draw[thick] (0.25,-1.1)--(1.5,-3);

\draw[thick] (0,0) -- (-0.25,-1.5);
\draw[thick] (-0.25,-1.5)--(-1.5,-3);

\draw[thick] (0.3,-1.1) -- (0.2,-1.75);
\draw[thick] (-1.5,-3) -- (0.2,-1.75);

\draw[thick] (1.5,-0.8)--(0,0);
\draw[thick] (1.5,-0.8)--(0.3,-1.1);
\draw[thick] (1.5,-0.8)--(1.5,-3);

\draw[thick] (-1.4,-1)--(0,0);
\draw[thick] (-1.4,-1)--(-0.25,-1.5);
\draw[thick] (-1.4,-1)--(-1.5,-3);

\draw[thick] (0.5, -2) -- (-1.5,-3);
\draw[thick] (0.5, -2) -- (0.3,-1.1);
\draw[thick] (0.5, -2) -- (0.2,-1.75);

\draw[thick] (1.5,-3) -- (0.3,-2.75);
\draw[thick] (-1.4,-3) -- (0.3,-2.75);

\draw[thick] (-0.4,-3.5) -- (0.3,-2.75);
\draw[thick] (-0.4,-3.5) -- (1.5,-3);
\draw[thick] (-0.4,-3.5) -- (-1.5,-3);
\end{tikzpicture}
&
	\begin{tikzpicture}
\filldraw[black] (0,0) circle (2pt);
\filldraw[black] (-1.5,-3) circle (2pt);
\filldraw[black] (1.5,-3) circle (2pt);
\filldraw[black] (0.5,-1) circle (2pt);
\filldraw[black] (-0.5,-1) circle (2pt);
\filldraw[black] (0,-3) circle (2pt);
\filldraw[black] (0,-1.5) circle (2pt);

\filldraw[black] (1.5,-1) circle (2pt);
\filldraw[black] (-1.5,-1) circle (2pt);
\filldraw[black] (0,-3.5) circle (2pt);
\filldraw[black] (0.5,-2) circle (2pt);

\draw[thick] (0,0) -- (1.5,-3) -- (-1.5,-3)--(0,0);
\draw[thick] (-1.5,-3) -- (0.5,-1);

\draw[thick] (1.5,-1)--(0,0);
\draw[thick] (1.5,-1)--(0.5,-1);
\draw[thick] (1.5,-1)--(1.5,-3);

\draw[thick] (-1.5,-1)--(0,0);
\draw[thick] (-1.5,-1)--(-0.5,-1);
\draw[thick] (-1.5,-1)--(-1.5,-3);

\draw[thick] (0.5, -2) -- (-1.5,-3);
\draw[thick] (0.5, -2) -- (0.5,-1);
\draw[thick] (0.5, -2) -- (0,-1.5);

\draw[thick] (0,-3.5) -- (0,-3);
\draw[thick] (0,-3.5) -- (1.5,-3);
\draw[thick] (0,-3.5) -- (-1.5,-3);
\end{tikzpicture}
	\end{tabular}
	\caption{The cone graph $G^C(S)$ from Figure \ref{Cone_graph_rods} and a string configuration realizing $S^C$.}	
		\label{Cone_graph_ng}
	\end{center}
\end{figure}
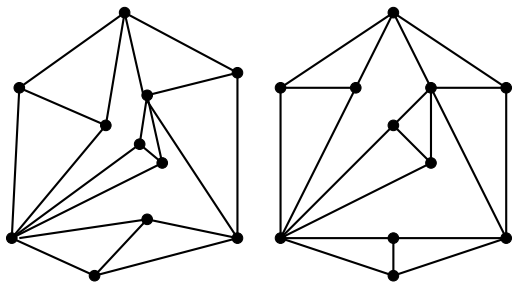


Whiteley proved the implication $2 \implies 3$ of Theorem \ref{Whiteley5.4} by showing that if the incidence geometry satisfies the count given in 2, then the cone incidence geometry has a realization as a minimally infinitesimally rigid string configuration in $\mathbb{R}^2$. Such a string configuration is a minimally infinitesimally rigid body-and-joint framework realizing the incidence geometry, with all joints incident to a body collinear.

\section{Some useful lemmas}


In this section, we will state and prove four lemmas that we will use to prove our main result. For simplicity of notation, we will in this section denote edge sets by $E^*$ where $*$ is some suitable decoration, and the vertex set generated by the edge set $E^*$ by $V^*$, e.g. the set $V^i$ is the set of vertices generated by $E^i$.

First, recall that there is not a unique cone graph of an incidence geometry, since there are several choices for the inner vertex for each line, and in general each choice yields a different graph. The choice of inner vertex, however, does not affect the rigidity properties, as we show in the following lemma.

\begin{lemma}
Let $S=(P,L,I)$ be an incidence geometry. If one cone graph of $S$ is rigid in $\mathbb{R}^2$, then all cone graphs of $S$ are rigid in $\mathbb{R}^2$.
\label{RigidConeGraph}
\end{lemma}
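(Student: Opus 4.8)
The plan is to reduce the statement to a single \emph{body replacement} and to exploit that the cone of any line is itself a rigid graph, whatever inner vertex is chosen for it. Two cone graphs of $S$ differ only in their choices of inner vertices, and any two such choices can be joined by a sequence of moves each changing the inner vertex of a single line. Hence it suffices to prove that if $G_1^C(S)$ and $G_2^C(S)$ are two cone graphs that agree except for the inner vertex of one line $\ell$, then $G_1^C(S)$ is rigid in $\mathbb{R}^2$ if and only if $G_2^C(S)$ is. Applying this repeatedly, one line at a time, and using transitivity of ``rigid if and only if rigid'' then yields the lemma.

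First I would record the structural fact that the cone of $\ell$ is minimally rigid, for every choice of inner vertex. Write $q_1,\dots,q_k$ for the points incident to $\ell$, $c_\ell$ for the cone vertex, and $p$ for the chosen inner vertex; the cone of $\ell$ consists of the spokes $(c_\ell,q_j)$ together with the star edges $(p,q_j)$. This graph is the union of the triangles $\{c_\ell,p,q_j\}$, all sharing the common edge $(c_\ell,p)$; equivalently it is built from that edge by successive degree-two vertex additions, each of which preserves rigidity. Its vertex set $W_\ell=\{c_\ell,q_1,\dots,q_k\}$ has $k+1$ elements and the cone has $2k-1=2|W_\ell|-3$ edges, so it is minimally rigid. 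The essential point is that this holds for \emph{every} inner vertex, so both cones of $\ell$ live on the same vertex set $W_\ell$ and are both rigid on it.

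The core of the argument is then a closure step in the generic two-dimensional rigidity matroid. Let $r$ denote its rank function, so that a graph $H$ is rigid exactly when $r(E(H))=2|V(H)|-3$, and let $\binom{W_\ell}{2}$ be the set of all edges with both endpoints in $W_\ell$. Because the cone of $\ell$ is rigid on $W_\ell$, every edge of $\binom{W_\ell}{2}$ is spanned by the edges of that cone, hence by $E(G_i^C(S))$; therefore adjoining all of $\binom{W_\ell}{2}$ leaves the rank unchanged, giving $r(E(G_i^C(S)))=r\!\left(E(G_i^C(S))\cup\binom{W_\ell}{2}\right)$ for $i=1,2$. Now the two cone graphs share all edges coming from lines other than $\ell$ --- call this common edge set $O$ --- and both cones of $\ell$ are contained in $\binom{W_\ell}{2}$. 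Consequently $E(G_1^C(S))\cup\binom{W_\ell}{2}=O\cup\binom{W_\ell}{2}=E(G_2^C(S))\cup\binom{W_\ell}{2}$, so the two ranks coincide. Since the two cone graphs also have the same vertex set, one is rigid if and only if the other is.

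The step requiring the most care, and the main obstacle, is this rank equality: one must check that saturating $W_\ell$ with all possible edges absorbs both cones of $\ell$ and leaves the contributions $O$ of the other lines untouched. The rigidity of the cone of $\ell$ is exactly what makes the saturation harmless, and it is also what renders the argument insensitive to incidental coincidences --- for instance when two points of $\ell$ happen to lie on a further common line, so that some edge of $\binom{W_\ell}{2}$ is already present from another star. The remaining verifications, namely the degree-two construction of the cone and the degenerate cases $k=1,2$ where the choice of inner vertex is vacuous, are routine.
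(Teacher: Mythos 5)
Your proof is correct, but it takes a different route from the paper's. The paper argues locally and one edge at a time: to move the inner vertex of the star for $\ell$ from $p$ to $p'$, it adds the edge $(p',p_i)$, observes that this creates a copy of $K_4$ on $\{p,p',p_i,c\}$, and since $K_4$ is a rigidity circuit containing $(p,p_i)$, that edge can then be deleted without changing the rank; iterating over all $p_i$ and all lines transforms one cone graph into the other while preserving rigidity. You instead argue globally via matroid closure: since the cone of $\ell$ is minimally rigid on its vertex set $W_\ell$ for \emph{any} choice of inner vertex (your degree-two-addition count $2|W_\ell|-3$ is right), the whole clique $\binom{W_\ell}{2}$ lies in the closure of either cone graph's edge set, so saturating $W_\ell$ leaves both ranks unchanged and identifies the two saturated edge sets as $O\cup\binom{W_\ell}{2}$. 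Both arguments ultimately rest on circuit/closure exchange in the generic two-dimensional rigidity matroid, but yours replaces the paper's sequence of explicit $K_4$ swaps with a single saturation step per line, which is cleaner, gives the two-way implication symmetrically rather than by swapping the roles of the two graphs, and handles incidental coincidences (an edge of $\binom{W_\ell}{2}$ already present from another star) without comment, whereas the paper's edge-by-edge deletion implicitly assumes the deleted edge is not also needed as a star edge of another line. The price is that you invoke the rank function and closure operator of the rigidity matroid explicitly, where the paper stays at the level of Theorem \ref{Geiringer-Laman} counts and the single fact that $K_4$ is a circuit.
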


\begin{proof}
Suppose $G^C(S)$ is a cone graph of $S$ which is rigid in $\mathbb{R}^2$, and $G_1^C(S)$ is some other cone graph of $S$. If the cone graphs $G^C(S)$ and $G_1^C(S)$ are different, then the inner vertices of the star graph on the point vertices will be different for some line(s).

Take such a line $\ell$. Suppose $p$ is the inner vertex  in $G^C(S)$ of the star graph representing a line $\ell \in L$, and $p'$ is the inner vertex in $G_1^C(S)$ of the star graph on representing the same line $\ell$. Let $p, p', p_1, p_2,...,p_k$ be the point vertices in the cone of $\ell$. There are edges $(p,p_i)$, for $1\leq i \leq k$, in $G^C(S)$, and edges $(p',p_i)$, for $1 \leq i \leq k$ in $G_1^C(S)$. The edge $(p,p')$ is in both graphs. 

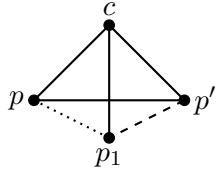
\begin{figure}
	\begin{center}
	
	\begin{tikzpicture}
	\filldraw[black] (0,0) circle (2pt);
	\filldraw[black] (-1,-1) circle (2pt);
	\filldraw[black] (0,-1.5) circle (2pt);
	\filldraw[black] (1,-1) circle (2pt);

	\draw[thick] (0,0) -- (-1,-1);
	\draw[thick] (0,0) -- (0,-1.5);
	\draw[thick] (0,0) -- (1,-1);
	\draw[thick] (-1,-1) -- (1,-1);
	\draw[thick, dotted] (-1,-1) -- (0,-1.5);
	\draw[thick, dashed] (1,-1) -- (0,-1.5);  

	\node[anchor=south] at (0,0) {$c$};
	\node[anchor=west] at (1,-1) {$p'$};
	\node[anchor=east] at (-1,-1) {$p$};
	\node[anchor=north] at (0, -1.5) {$p_1$};

	\end{tikzpicture}
	
	\caption{It is possible to switch certain edges in a cone without making the cone graph flexible.}	
	\label{switch}
	\end{center}
\end{figure}

Adding the edge $(p',p_1)$ to the edge set of $G^C(S)$ creates a copy of $K_4$ on the vertices $p$, $p'$, $p_1$ and $c$, where $c$ is the cone vertex representing $\ell$. See Figure \ref{switch}. $K_4$ is a rigidity circuit, and as the edge $(p, p_1)$ is one of the edges in the copy of $K_4$ that is created, we can subsequently remove the edge $(p,p_1)$ without the graph becoming flexible. In the same way we can replace all edges $(p, p_i)$ in the cone of $\ell$ by edges $(p', p_i)$ while preserving rigidity, to obtain a rigid cone graph where $p'$ is the inner vertex of the star subgraph on the point vertices in the cone of $\ell$.

This process can be repeated for any line $\ell$ such that the inner vertex of the cone of $\ell$ in $G_1^C(S)$ is different from the inner vertex of the cone of $\ell$ in $G^C(S)$, until we obtain $G_1^C(S)$. Hence $G_1^C(S)$ is also rigid in $\mathbb{R}^2$.
\end{proof}

We will now state and prove two lemmas concerning minimally rigid subgraphs of a graph $G=(V,E)$ with vertex sets that intersect non-trivially. 

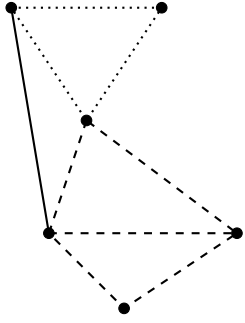
\begin{figure}
	\begin{center}
	
	\begin{tikzpicture}
	\filldraw[black] (0,0) circle (2pt);
	\filldraw[black] (2,0) circle (2pt);
	\filldraw[black] (1,-1.5) circle (2pt);
	\filldraw[black] (0.5,-3) circle (2pt);
	\filldraw[black] (3,-3) circle (2pt);
	\filldraw[black] (1.5,-4) circle (2pt);
	
	\draw[thick, dotted] (0,0) -- (2,0)--(1,-1.5)--(0,0);
	\draw[thick, dashed] (1,-1.5) -- (0.5,-3)--(1.5,-4)--(3,-3)--(1,-1.5);
	\draw[thick, dashed] (0.5,-3) -- (3,-3);
	\draw[thick] (0,0) -- (0.5,-3);
	\end{tikzpicture}
	
	\caption{Two minimally rigid graphs that intersect in one vertex, and an added edge that makes the graph rigid.}	
	\label{OnePoint}
	\end{center}
\end{figure}

The next lemma concerns minimally rigid subgraphs of a graph with vertex sets that intersect in exactly one vertex. Figure \ref{OnePoint} shows an example of two minimally rigid graphs, a triangle (dotted) and a minimally rigid graph on four vertices (dashed), that intersect in a single vertex. Clearly, the graph consisting of the dotted and dashed edges in Figure \ref{OnePoint} is flexible. However, adding the solid edge as in the figure would make the graph minimally rigid. The lemma says that the situation illustrated by the graph in Figure \ref{OnePoint} is the only situation that can occur if two minimally rigid subgraphs of a graph $G$ intersect in a point.

\begin{lemma}
Let $G_1=(V_1,E_1)$ and $G_2=(V_2,E_2)$ be subgraphs of a graph $G=(V,E)$ such that $G_1$ and $G_2$ are minimally rigid in $\mathbb{R}^2$, and $V_1 \cap V_2=\{v\}$. Let $E_3=E_1 \cup E_2$. Also, let $e=(v_1, v_2)$ be an arbitrary edge with $v_1 \in V_1 \setminus \{v\}$ and $v_2 \in V_2 \setminus \{v\}$, and let $E_4=E_3 \cup \{e\}$. Then

\begin{enumerate}
	\item the graph $G_3=(V_3, E_3)$ generated by $E_3$ is flexible in $\mathbb{R}^2$, and
	\item the graph $G_4=(V_4,E_4)$ generated by $E_4$ is minimally rigid in $\mathbb{R}^2$.
\end{enumerate}
\label{empty_intersect}
\end{lemma}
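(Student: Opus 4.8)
The plan is to reduce both parts to the combinatorial count of Theorem \ref{Geiringer-Laman}. First I would record two elementary facts about the overlap. Since $V_1 \cap V_2 = \{v\}$, no edge can have both endpoints in $V_1 \cap V_2$, so $E_1 \cap E_2 = \emptyset$ and hence $|E_3| = |E_1| + |E_2|$; moreover $|V_3| = |V_1| + |V_2| - 1$, and $V_4 = V_3$ because the added edge $e$ joins vertices already present in $V_3$. Because $G_1$ and $G_2$ are minimally rigid, Theorem \ref{Geiringer-Laman} gives $|E_i| = 2|V_i| - 3$ for $i = 1,2$.

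For the first claim I would simply combine these counts:
\[
|E_3| = (2|V_1| - 3) + (2|V_2| - 3) = 2(|V_1| + |V_2|) - 6 = 2|V_3| - 4.
\]
Since any rigid graph on $V_3$ must contain a spanning minimally rigid subgraph, hence at least $2|V_3| - 3$ edges, the deficit of one edge forces $G_3$ to be flexible.

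For the second claim the edge count is already correct: $|E_4| = |E_3| + 1 = 2|V_3| - 3 = 2|V_4| - 3$. By Theorem \ref{Geiringer-Laman} it therefore suffices to prove that $E_4$ is independent, i.e.\ that $|E'| \leq 2|V'| - 3$ for every nonempty $E' \subseteq E_4$ with support $V'$. I would split into subsets avoiding $e$ and subsets containing $e$. For $E' \subseteq E_3$, writing $E'_i = E' \cap E_i$ with support $V'_i$, independence of $E_1$ and $E_2$ handles the case where only one $E'_i$ is nonempty; if both are nonempty, then using $|V'_1 \cap V'_2| \leq 1$ one obtains $|E'| \leq 2(|V'_1| + |V'_2|) - 6 \leq 2|V'| - 4$. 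For $E' = E'' \cup \{e\}$ with $E'' \subseteq E_3$, the crucial case is when both endpoints $v_1, v_2$ of $e$ already lie in the support $V''$ of $E''$: here I would use that $v_1 \in V_1 \setminus \{v\}$ and $v_2 \in V_2 \setminus \{v\}$ force both $E''_1$ and $E''_2$ to be nonempty, so the previous estimate yields $|E''| \leq 2|V''| - 4$ and hence $|E'| = |E''| + 1 \leq 2|V''| - 3 = 2|V'| - 3$; the remaining subcases, where $e$ contributes one or two new vertices to the support, are slack by a wider margin.

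The main obstacle is exactly this last case analysis, and the only tight inequality occurs when both endpoints of $e$ are already supported. This is where independence genuinely relies on the hypothesis $v_1, v_2 \neq v$: it guarantees that $e$ links the two rigid pieces rather than lying inside one of them, so that both $E''_1$ and $E''_2$ are forced to be nonempty and the sharper bound $2|V''| - 4$ applies. Once independence is established, Theorem \ref{Geiringer-Laman} together with the exact count $|E_4| = 2|V_4| - 3$ yields that $G_4$ is minimally rigid.
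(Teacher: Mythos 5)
Your proposal is correct and follows essentially the same route as the paper's proof: part 1 via the count $|E_3|=2|V_3|-4$, and part 2 via verifying the Geiringer--Laman condition for all subsets of $E_4$, with the same case split on whether $e$ belongs to the subset and whether its endpoints lie in the support, the tight case being exactly the one you identify.
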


\begin{proof}
\textit{1.} Note that the intersection of $E_1$ and $E_2$ must be empty, as $V_1 \cap V_2=\{v\}$. $G_1$ and $G_2$ are minimally rigid, so $|E_3|=|E_1|+|E_2|=2|V_1|-3+2|V_2|-3=2(|V_1|+|V_2|)-6$ by Theorem \ref{Geiringer-Laman}. Also, $|V_3|=|V_1|+|V_2|-1$, so $|E_3|=2|V_1|+2|V_2|-6=2|V_3|-4$. Since $|E_3| < 2|V_3|-3$, $G_3$ is flexible.

\textit{2.} We will use Theorem \ref{Geiringer-Laman} to show that $G_4$ is minimally rigid in $\mathbb{R}^2$. Let $E' \subseteq E_4$ be an arbitrary subset. Suppose first that $e \notin E'$. Then $E' \subseteq E_3= E_1 \cup E_2$. If $E' \subseteq E_1$ or $E' \subseteq E_2$, then $|E'| \leq 2|V'| - 3$ as $G_1$ and $G_2$ are minimally rigid in $\mathbb{R}^2$. 

 Else, we can write $E'$ as $E' = E_1' \cup E_2'$, where $E_1' \subseteq E_1$ and $E_2' \subseteq E_2$, then $|E'|=|E_1'|+|E_2'| \leq 2|V_1'| -3 + 2|V_2'| -3$. Recall that we assumed $V_1 \cap V_2 = \{ v \}$, so if $v \notin V'$, then $|V'|=|V_1'|+|V_2'|$ and $|E'| \leq 2(|V_1'|+|V_2'|)-6= 2|V'|-6$. 

If $v \in V'$, then $|V'|=|V_1'|+|V_2'|-1$, and $|E'| \leq 2(|V_1|+|V_2|)-6=2|V'|-4$.

Suppose instead that $e \in E'$. Let $E''=E' \setminus \{e\}$, and $V''$ be the support of $E''$. Note that $E'' \subseteq E_3$. If $v_1 \in V''$ and $v_2 \in V''$, then $E''$ cannot be strictly contained in either $E_1$ or $E_2$, since $v_1 \in V_1 \setminus \{v\}$ and $v_2 \in V_2 \setminus \{v\}$. We have seen that $|E''| \leq 2|V''|-4$ in this case. Furthermore $|V''|=|V'|$, so $|E'|=|E''|+1 \leq 2|V''|-3 = 2|V'|-3$. 

If one of $v_1$ and $v_2$, but not both, is in $V''$, then $|V''|=|V'|-1$ and $|E'|=|E''|+1 \leq 2|V''|-2 = 2|V'|-4$, where $|E''| \leq 2|V''| - 3$ since $E'' \subseteq E_3$. 

If neither of $v_1$ and $v_2$ is in $V''$, then $|V''|=|V'|-2$ and $|E'|=|E''|+1 \leq 2|V''|-2=2|V'|-6$. 

Hence, $|E'| \leq 2|V'| - 3$ for all subsets $E' \subseteq E_4$. We also showed that $|E_3|=2|V_3|-4$, and since $|E_4|=|E_3|+1$ and $|V_4|=|V_3|$, it follows that $|E_4|=2|V_3|-3$. Therefore $G_4$ is minimally rigid in $\mathbb{R}^2$ by Theorem \ref{Geiringer-Laman}.
\end{proof}

\begin{figure}
	\begin{center}
	
	\begin{tikzpicture}
	\filldraw[black] (0,0) circle (2pt);
	\filldraw[black] (-1,-1.5) circle (2pt);
	\filldraw[black] (1,-1.5) circle (2pt);
	\filldraw[black] (-0.5,-3) circle (2pt);
	\filldraw[black] (2,-3) circle (2pt);
	
	\draw[thick, dotted] (0,0) -- (-1,-1.5);
	\draw[thick] (1,-1.5) -- (-1,-1.5);
	\draw[thick, dotted] (1,-1.5)--(0,0);
	\draw[thick, dashed] (-1,-1.5)--(-0.5,-3)--(2,-3)--(1,-1.5)--(-0.5,-3);
	\end{tikzpicture}
	
	\caption{Two minimally rigid graphs that intersect in an edge and two points.}
	\label{OneEdge}
	\end{center}
\end{figure}
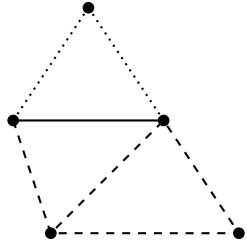

Next, we will prove a lemma describing what happens if the graph $G=(V,E)$ is independent, and the minimally rigid graphs intersect in more than one point. Figure \ref{OneEdge} shows a triangle (dotted and solid edges) and a minimally rigid graph on four vertices (dashed and solid edges) with vertex sets that intersect in two vertices. We can see that the edge sets also intersect in the solid edge. We will see that if $G$ is independent and the vertex sets of the minimally rigid subgraphs intersect in more than one vertex, then the edge sets always intersect. The graph in Figure \ref{OneEdge} is minimally rigid.

\begin{lemma}
Let $G_1=(V_1,E_1)$ and $G_2=(V_2, E_2)$ be subgraphs of an independent graph $G=(V,E)$ that are minimally rigid in $\mathbb{R}^2$. Let $E_3=E_1 \cup E_2$. If $|V_1 \cap V_2| > 1$, then 
 	\begin{enumerate}
	\item $|E_1 \cap E_2| \geq 1$ and 
	\item the graph $G_3=(V_3, E_3)$ generated by $E_3$ is minimally rigid.
	\end{enumerate}
	\label{intersection}
\end{lemma}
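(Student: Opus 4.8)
The plan is to reduce everything to a counting argument resting on the Geiringer--Laman characterization (Theorem \ref{Geiringer-Laman}) together with the independence of $G$, applied twice: once to the union $E_1 \cup E_2$ and once to the intersection $E_1 \cap E_2$. Set $k = |V_1 \cap V_2|$ and $m = |E_1 \cap E_2|$; by hypothesis $k \geq 2$. Since $G_1$ and $G_2$ are minimally rigid, $|E_1| = 2|V_1| - 3$ and $|E_2| = 2|V_2| - 3$, so by inclusion--exclusion
\begin{equation*}
|E_3| = |E_1| + |E_2| - m = 2|V_1| + 2|V_2| - 6 - m,
\end{equation*}
while $|V_3| = |V_1| + |V_2| - k$. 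Substituting gives $|E_3| = 2|V_3| + 2k - 6 - m$, so the whole lemma reduces to pinning down the value of $m$.

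First I would bound $m$ from below. Since $E_3 = E_1 \cup E_2 \subseteq E$ and $G$ is independent, $|E_3| \leq 2|V_3| - 3$; combined with the expression above this yields $2k - 6 - m \leq -3$, that is $m \geq 2k - 3$. Because $k \geq 2$, this already forces $m \geq 1$, which is exactly statement~1.

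Next I would bound $m$ from above, and this is the step that needs the key structural observation: every edge of $E_1 \cap E_2$ lies in $E_1$, hence has both endpoints in $V_1$, and lies in $E_2$, hence has both endpoints in $V_2$; therefore its endpoints lie in $V_1 \cap V_2$. Consequently the support $V'$ of $E_1 \cap E_2$ satisfies $V' \subseteq V_1 \cap V_2$, so $|V'| \leq k$. Applying independence of $G$ to the subset $E_1 \cap E_2 \subseteq E$ gives $m \leq 2|V'| - 3 \leq 2k - 3$. Together with the lower bound this forces $m = 2k - 3$.

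Finally, substituting $m = 2k - 3$ into $|E_3| = 2|V_3| + 2k - 6 - m$ gives $|E_3| = 2|V_3| - 3$. Since $E_3 \subseteq E$ and $G$ is independent, every subset of $E_3$ automatically satisfies the sparsity inequality $|E'| \leq 2|V'| - 3$, so $E_3$ is independent; together with the exact edge count, Theorem \ref{Geiringer-Laman} shows $G_3$ is minimally rigid, giving statement~2. The argument is short once one sees the double use of independence; the only delicate point is verifying that the support of $E_1 \cap E_2$ genuinely sits inside $V_1 \cap V_2$, which is precisely what makes the upper bound on $m$ meet the lower bound.
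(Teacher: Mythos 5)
Your proof is correct, and while it shares the paper's overall strategy---pure counting via Theorem \ref{Geiringer-Laman} and the independence of $G$---it differs in one substantive step, and that difference matters. Both arguments get the lower bound $|E_1\cap E_2|\geq 2|V_1\cap V_2|-3$ (hence statement 1) by applying independence to the union $E_1\cup E_2$. For statement 2, however, the paper derives $|E_3|\geq 2|V_3|-3$ from the inequality $|V_1\cap V_2|\geq |E_1\cap E_2|+1$, which is a forest-type bound: it holds only if $E_1\cap E_2$ is acyclic, and it fails for instance when the two minimally rigid subgraphs share a triangle ($i=3$ common edges but only $3<i+1$ common vertices). You instead bound $|E_1\cap E_2|$ from \emph{above} by applying independence a second time, to the intersection $E_1\cap E_2$ itself, after observing that its support lies inside $V_1\cap V_2$; this gives $|E_1\cap E_2|\leq 2|V_1\cap V_2|-3$, which pinches against the lower bound to yield $|E_1\cap E_2|=2|V_1\cap V_2|-3$ exactly and hence $|E_3|=2|V_3|-3$. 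So your double use of independence is not just a stylistic variant: it closes a genuine gap in the paper's published argument (the conclusion of the lemma is still true, as your proof shows), and as a bonus it identifies the exact size of the edge intersection, which the paper's approach does not. The only point worth making explicit is that applying the sparsity count to $E_1\cap E_2$ presupposes that this set is nonempty, which you have already secured via statement 1.
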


\begin{proof}
	\textit{1.} Suppose $|V_1 \cap V_2|=k \geq 2$, but $|E_1 \cap E_2|=0$. Then $|V_3| \leq |V_1|+|V_2| - 2$, and $|E_3|=|E_1| + |E_2| = 2|V_1|+2|V_2| - 6 = 2(|V_1|+|V_2|-2)-2 \geq 2|V_3|-2$. This contradicts $G=(V,E)$ being independent. 
	
	\textit{2.} Let $|E_1 \cap E_2| = i \geq 1$. Then $|V_1 \cap V_2| \geq i+1 \geq 2$, so $|V_3| \leq |V_1|+|V_2|-(i+1) \leq |V_1| + |V_2| - 2$, and $|E_3|=|E_1|+|E_2|-i = 2|V_1|+2|V_2| -6 - i = 2(|V_1|+|V_2|)-(i+1)-2-3 \geq 2(|V_1|+|V_2|) - 2(i+1) -3 \geq 2|V_3|-3$. Since $G_3$ is a subgraph of an independent graph $G$, $G_3$ is also independent, and by Theorem \ref{Geiringer-Laman} it must also hold that $|E_3| \leq 2|V_3|-3$, so we conclude that $|E_3| \geq 2|V_3|-3$, and consequently $|E_3|=2|V_3|-3$. As $G_3$ is independent, and $|E_3|=2|V_3|-3$, $G_3$ must be minimally rigid in $\mathbb{R}^2$, by Theorem \ref{Geiringer-Laman}. 
\end{proof}

In the following lemma, we construct a subgraph of $G^C(S)$ and a subgeometry of $S^C$ that will allow us to relate the rigidity properties of $G^C(S)$ and $S^C$.

\begin{lemma}
Let $S=(P,L,I)$ be an incidence geometry, and let $G'=(V',E')$ be a subgraph of a cone graph $G^C(S)=(V^C,E^C)$ of $S$ with $V=V^C$. Define the subgeometry $S'=(P',L',I')$ of $S^C=(P^C,L^C,I^C)$ such that $P'=P^C$, $L'$ consists of the subsets of $P^C$ that are collinear in $L^C$ and lie in a common star subgraph in $G'$, and $I'$ is defined by inclusion. Then there is a subgraph $G'=(V',E')$ of a cone graph $G^C(S)=(V^C,E^C)$ of $S$ with $V'=V^C$ such that
\begin{enumerate}
	\item $G'$ is maximally independent in $G^C(S)$,
	\item the subgeometry $S'=(P',L',I')$ of $S^C=(P^C,L^C,I^C)$, is sharply independent, and
	\item $S'=(P',L',I')$ satisfies $|I'|=|L'|+2|P'|-3$ if $G'$ satisfies $|E'|=2|V'|-3$.
\end{enumerate}
\label{Construction}
\end{lemma}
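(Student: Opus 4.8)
The plan is to obtain $G'$ by deleting edges from the full cone graph $G^C(S)$ down to a maximal independent subgraph, but to delete them in a controlled, cone-by-cone fashion so that the induced subgeometry $S'$ is sharply independent; property 1 then holds by construction. Throughout I would use two identities that hold for the $S'$ induced by \emph{any} spanning subgraph $G'$. First, since $V'=V^C$ and $P'=P^C$, we have $|V'|=|P|+|L|=|P'|$. Second, every edge of $G'$ is a tree edge of exactly one sub-line of $L'$: a star edge lies in a collinear sub-line, which on $s$ points contributes $s-1$ edges and $s$ incidences, and a cone edge lies in a two-point cone sub-line contributing one edge and two incidences. Summing over sub-lines gives $|E'|=|I'|-|L'|$. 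Given these, property 3 is immediate: if $|E'|=2|V'|-3$ then $|I'|-|L'|=2|P'|-3$, i.e.\ $|I'|=|L'|+2|P'|-3$.

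For the construction itself I would process the lines $\ell_1,\dots,\ell_{|L|}$ of $S$ one at a time, maintaining an independent subgraph $G_i'$ together with the subgeometry $S_i'$ it induces. Each cone on a line with $k$ points is minimally rigid, since it has $k+1$ vertices and $2k-1=2(k+1)-3$ edges and satisfies the count of Theorem \ref{Geiringer-Laman}, so adding a new cone means gluing a minimally rigid graph to $G_i'$ along shared point vertices. Lemmas \ref{empty_intersect} and \ref{intersection} describe the two essential cases: if the new cone meets the current graph in at most one vertex all of its edges may be added while preserving independence, and if it meets the current graph in more vertices then, since distinct cones share no edges, independence forces some of its edges to be dropped. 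In that case I would delete \emph{star} edges in preference to cone edges; by Lemma \ref{RigidConeGraph} this choice does not change the rigidity of the resulting cone graph, while it keeps the collinear sub-lines of $S'$ as short as possible, which is exactly what the sharp-independence count will require.

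For property 2 I would verify that each step preserves sharp independence. Given $J\subseteq I'$ with support $Q\times M$, write $Q_m=\{p\in Q:(p,m)\in I'\}$ and associate to $J$ the edges of $G'$ lying inside $Q$ that belong to lines of $M$. A collinear sub-line whose inner vertex lies in $Q$ then contributes $|Q_m|-1$ such edges and a two-point cone sub-line contributes one, while every sub-line contributes $|Q_m|$ incidences. Since $G'$ is independent these edges number at most $2|Q|-3$, and comparing incidence count with edge count reduces inequality (\ref{inequality}) to controlling those lines $m\in M$ whose inner vertex is \emph{not} in $Q$: each contributes $|Q_m|$ incidences but no edge internal to $Q$. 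The purpose of deleting star edges is precisely to prevent such a line from carrying two or more of its leaves in $Q$ with its inner vertex excluded, which is the only way inequality (\ref{inequality}) could fail.

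I expect the main obstacle to be exactly this last point. Collinearity of the points on a line is a non-generic condition invisible to the generic rigidity matroid of $G^C(S)$, so an \emph{arbitrary} maximal independent subgraph need not yield a sharply independent $S'$: a pencil of many short collinear lines through a small common set of leaf points gives an independent $G'$ whose induced $S'$ nonetheless violates (\ref{inequality}). The work therefore lies in showing that the cone-by-cone deletion of star edges can always be arranged so that the number of collinear sub-lines crossing any point set stays within $|M|+2|Q|-3$, while leaving $G'$ maximal independent. I would carry this out by maintaining sharp independence as an explicit invariant of the induction and using Lemmas \ref{empty_intersect} and \ref{intersection} to account precisely for the edges and incidences gained or lost as each minimally rigid cone is attached.
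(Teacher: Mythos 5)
Your construction is the paper's own: order the lines of $S$ so that each new line shares a point with the previous ones, attach each cone greedily while testing independence, and accept the cone edge $(c,q)$ before the star edge $(p,q)$, so that a star edge to an already-present point survives only when its cone edge does. The identity $|I'|-|L'|=|E'|$ and the deduction of point 3 from it also match the paper, and your observation that maximality is automatic when the greedy test is run over \emph{all} edges of $G^C(S)$ is sound (and arguably cleaner than the paper's circuit-exchange argument, which is needed only because its construction skips testing the edges $(p,q_i)$ whose cone edge was rejected). The genuine gap is in point 2, which you explicitly defer, and the invariant you propose to close it with is not the right one. Writing $M_0$ for the lines of $M$ whose inner vertex lies outside $Q$, your accounting reduces inequality (\ref{inequality}) to $\sum_{m\in M_0}|Q_m|\le |M_0|$, and you assert that a line with two or more leaves in $Q$ and inner vertex excluded is ``the only way inequality (\ref{inequality}) could fail.'' But that condition is violated by perfectly legitimate subsets of $I'$ even for the constructed $G'$: take a collinear sub-line $m$ on $p,q_1,q_2$ with inner vertex $p$ and let $J$ consist of the two incidences of $q_1$ and $q_2$ with $m$. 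Then $\sum_{m\in M_0}|Q_m|=2>1=|M_0|$, yet $|J|=2=|M|+2|Q|-3$. The inequality survives only because your bound $|E_J|\le 2|Q|-3$ is far from tight when the edges of $G'$ internal to $Q$ span only part of $Q$, and it is exactly this slack that must absorb the excess from $M_0$. An induction ``maintaining sharp independence'' via your stated invariant therefore cannot be carried out, since the invariant is already false for the object you construct.

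What the paper does instead is a reduction rather than a direct count: it first proves (\ref{inequality}) by your counting for those $J$ that are induced by a subgraph of $G'$, and then shows that an arbitrary $J$ can be augmented or trimmed (adding missing points $q$, incidences $(q,\ell)$, the cone vertex $c$, and two-point lines $\langle c,q\rangle$ as needed) into a subgraph-induced set while adding at least as much to the right side of (\ref{inequality}) as to the left. The step that makes this bookkeeping work is precisely the structural property your edge-ordering secures, namely that every star edge $(p,q)$ of $G'$ is accompanied by the cone edge $(c,q)$, so the two-point lines needed to pay for the added incidences are guaranteed to lie in $L'$; your proposal installs this property but never brings it into the counting. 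Without this reduction, or a correct replacement for it, point 2 remains unproved.
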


\begin{proof}
First, we will construct the subgraph $G'$ and prove that it is maximally independent. Since $S$ is connected by assumption, we can order the lines in $L=\{\ell_1, \ell_2,...,\ell_n\}$ so that each line $\ell_i$ has at least one point in common with the set of lines $\{\ell_j\}_{j<i}$. We construct $G'$ recursively, by constructing a maximally independent subgraph $G_i$ of $G^C(S_i)$, where $S_i$ is the incidence geometry generated by $L_i=\{\ell_j \}_{j \leq i}$. The graph $G'$ we are looking for is $G_n$.

Let $G_1=(V_1,E_1)$ be a cone of the line $\ell_1$. Suppose that $G_{i-1}$ is given, and has as its vertices the points belonging to the set of lines $L_{i-1}$. By construction, we know that $\ell_i$ has at least one point $p$ in common with the lines in $L_{i-1}$. The point $p$ then corresponds to a vertex $p$ in the graph $G_{i-1}$. There may also be other points $q_1$,$q_2$,...,$q_l$ incident to $\ell_i$ that correspond to vertices in $G_{i-1}$. Further, there may be points $p_1$, $p_2$,...,$p_m$ incident to $\ell_i$, but not to any of the lines in $L_{i-1}$, that therefore do not correspond to vertices in $G_{i-1}$. 

To construct $G_i=(V_i, E_i)$, we perform the following steps, in order:
 
\begin{enumerate}
\item add the cone vertex $c$ corresponding to $\ell_{i}$ and vertices corresponding to $p_1, ..., p_m$ to $V_{i-1}$, i.e. let $V_i=V_{i-1} \cup \{c, p_1, ... , p_m\}$;
\item add the edge $(c,p)$ to $E_{i-1}$, i.e. set $E_i=E_{i-1} \cup \{(c,p)\}$; 
\item if $l \geq 1$, add the edge $(c, q_1)$ to $E_i$;
\item add edges $(c,p_i)$ for $1 \leq i \leq m$ to $E_i$;
\item add edges $(p,p_i)$ for $1 \leq i \leq m$ to $E_i$;
\item add all edges $(c,q_i)$ that do not make $E_i$ dependent;
\item add all edges $(p, q_i)$ such that $(c, q_i)$ was added in the previous step, and adding $(p,q_i)$ does not make $E_i$ dependent.
\end{enumerate}

Note that by construction $V_n=V'=V^C$. Note also that the edge $(p, q_i)$ is not added to $E_i$ in the final step unless the edge $(c,q_i)$ was added to $E_i$ first.

To show that $G'=G_n$ is independent in $G^C(S)$, we show by induction on the number of lines that each $G_i$ is maximally independent in $G^C(S_i)$. The first graph $G_1$ is a cone over a star, which is minimally rigid, and therefore $G_1$ is maximally independent in $G^C(S_1)$. 

Suppose that $G_{i-1}$ is maximally independent in $G^C(S_{i-1})$. 

If $l \geq 1$, then adding the cone vertex $c$ corresponding to $\ell_i$ and the edges $(p,c)$ and $(q_i,c)$ preserves independence, since one vertex and two edges are added. 

If $l=0$, then adding the cone vertex and the edge $(c,p)$ also preserves independence, as one vertex and one edge are added. 

Similarly, adding the points $p_i$ and the edges $(p,p_i)$ and $(c, p_i)$ preserves independence. Furthermore, the edges $(c,q_i)$ and $(p,q_i)$ are only added if adding them preserves independence among the edges. 

Hence $G_i$ is independent, since $G_{i-1}$ is independent and each step in the construction preserves independence. So we have shown that $G'=G_n$ is independent.

We will now show that $G_i$ is \textit{maximally} independent in $G^C(S_i)$. By assumption, $G_{i-1}$ is maximally independent in $G^C(S_{i-1})$, so we cannot add any more edges to the cones of the lines $\ell_1, ..., \ell_{i-1}$ while preserving independence. By construction, all edges $(c,p_k)$, $(p, p_k)$, and the edge $(c,p)$, are in $G_i$.
Also by construction, we cannot add any more edges $(c, q_i)$, or edges $(p,q_i)$ such that the edge $(c,q_i)$ is in $E_i$ without the edge set becoming dependent. 

We claim that adding any of the remaining edges, namely the edges $(p,q_i)$ such that the edge $(c,q_i)$ is not added in our construction, would make the set of edges $E_i$ dependent. 

Suppose for a contradiction that there is some edge $e=(p,q_i)$ such that $f=(c,q_i)$ is not in $E_i$, but $E_i \cup \{e\}$ is independent. If $f$ is not in $E_i$, then $E_i \cup \{f\}$ is dependent. Hence, there is a rigidity circuit containing $f$ generated by some subset of edges $E^1 \subseteq E_i \cup \{f\}$. As $E^1$ generates a rigidity circuit, $|E^1|=2|V^1|-2$. Let $E^2=E^1 \setminus \{f\} \cup \{e\}$. If $E_i \cup \{e\}$ is independent, then so is $E^2$, since $E^2 \subseteq E_i \cup \{e\}$. 

Every vertex in a rigidity circuit must have degree at least 3, so $c$ must be a vertex in the graph generated by $E^1 \setminus \{f\}$. Also, $E^1 \setminus \{f\} \subset E^2$, so $c \in V^2$. By construction $|E^1|= |E^2|$. 

\textit{Case 1.} If $p \in V^1$, then $|V^1|=|V^2|$, so $|E^2|=|E^1|=2|V^1|-2 = 2|V^2|-2$, which contradicts the assumption that $E^2$ is independent. 

\textit{Case 2.} Suppose on the other hand that $p \notin V^1$. Some edge $(c, p_k)$ or $(c, q_j)$ must be in $E^1$, and in $E^2$, since $c$ must have degree at least 3 in the rigidity circuit generated by $E^1$. Suppose some edge $(c,p_k)$ is in $E^1$ and $E^2$. 

We can add the edges $(p, p_k)$ and $(c,p)$ to the sets $E^1$ and $E^2$. We assumed that $p \notin V^1$, so neither edge can have originally been in $E^1$ and $E^2$. Adding the edges $(p, p_k)$ and $(c,p)$, and the point $p$, preserves the equality $|E^1|=2|V^1|-2$. Adding $p$, $(p, p_k)$, and $(c,p)$ also preserves independence of $E^2$. Furthermore, now $|V^1|=|V^2|$, so $|E^2| = |E^1| = 2|V^1|-2= 2|V^2|-2$, which contradicts that $E^2$ is independent. 

\textit{Case 3.} Suppose that $p \notin V^1$, and that some edge $(c, q_j)$ is in $E^1$ and $E^2$. 

\textit{Case 3.1.} If $e'=(p,q_j) \in E_i$, then we can proceed as in Case 2 to obtain a contradiction. 

\textit{Case 3.2.} If on the other hand $e'=(p, q_j)$ is not in $E_i$, then $E_i \cup \{e'\}$ is dependent. Else $e'$ would have been added to $E_i$, since $(c, q_j) \in E^1 \setminus \{f\} \subseteq E_i$. Since $E_i \cup \{e'\}$ is dependent, there is a rigidity circuit containing $e'$ generated by some set of edges $E^3 \subseteq E_i \cup \{e'\}$. Note that $E^4 = E^1 \setminus \{f\}$ generates a minimally rigid subgraph of $G_i$, with vertex set containing $q_i$, $q_j$ and $c$. Similarly, $E^5= E^3 \setminus \{e'\}$ generates a minimally rigid subgraph of $G_i$ with vertex set containing $q_j$ and $p$. Furthermore, $q_j \in V^4 \cap V^5$, so $E_4$ and $E_5$ generate two minimally rigid subgraphs of $G_i$ with vertex sets that intersect in at least one vertex.

\textit{Case 3.2.1.} If $V^4$ and $V^5$ intersect in more than one vertex, then we are in the situation of Lemma \ref{intersection}. In this case $E^4 \cup E^5$ generates a minimally rigid graph that does not contain $e$, but that does contain the vertices $p$ and $q_i$. Adding $e$ to this minimally rigid graph causes a dependency among the edges. As $E^4 \cup E^5 \subseteq E_i$, $E_4 \cup E_5 \cup \{e\}$ being dependent contradicts $E_i \cup \{e\}$ being independent. 

\textit{Case 3.2.2.} Suppose that $V^4$ and $V^5$ intersect in exactly one vertex. Then we are in the situation of Lemma \ref{empty_intersect}.  Note that if $(p, c)$ were already in $E^4 \cup E^5$, then $p \in V^4$, if $(p,c) \in E^4$ or $c \in V^5$, if $(p,c) \in E^5$, so one of $p$ and $c$ would belong to the intersection of $V_4$ and $V_5$. Since we already know that $q_j \in V_4 \cap V_5$, $V_4$ and $V_5$ would intersect in more than one vertex. The edge $(p,c)$ therefore cannot be in $E_4 \cup E_5$, so, by Lemma \ref{empty_intersect}, we can add the edge $(p,c)$ to $E^4 \cup E^5$ to obtain a minimally rigid graph that contains the vertices $p$ and $q_i$. Adding $e$ to this minimally rigid graph causes a dependency among the edges, and since $E^4 \cup E^5 \cup \{(p,c)\} \cup \{e\} \subseteq E_i$, this contradicts $E_i \cup \{e\}$ being independent.

We can now conclude that $e=(p, q_i)$ cannot be added to $E_i$ without causing any dependencies, so each $G_i$ is maximally independent in $G^C(S_i)$. Therefore, by induction, $G'=G_n$ is maximally independent in $G^C(S_n)=G^C(S)$, and by construction $V_n=V'=V^C$. This proves point 1 of the Lemma.

Next we want to prove that $S'$ is sharply independent, meaning that inequality $(\ref{inequality})$ holds for any subset $J \subset I'$ and its support $Q \times M$.



We will first show that if $J\subseteq I'$ with support  $Q \times M \subseteq P' \times L'$ is the set of incidences of an incidence geometry obtained from a subgraph of $G'$ in the same manner $S'$ is obtained from $G'$, then $|J| \leq |M|+2|Q|-3$.

Indeed, let $G''=(V'',E'')$ be a subgraph of $G'$ and let $I''\subseteq V''\times E''$ be the incidence relation of $G''$ when regarded as an incidence geometry. 
Let $Q= V''$, and let $M$ consist of the subsets of points that are collinear in $S^C$ and in a common star graph in $G''$. Note that a single edge is a star graph on two vertices.

A star graph representing a line $\ell$ on $k(\ell)$ points contributes $k(\ell)-1$ edges to $E''$. So for every line $\ell$ on $k(\ell) \geq 2$ points in $M$, there are $k(\ell)-2$ more edges in $E''$ than there are lines in $M$.

Similarly, a line incident to $k(\ell)$ points contributes $k(\ell)$ incidences to $J$, while the $k(\ell)-1$ edges in the star graph representing $\ell$ contribute $2k(\ell)-2$ incidences to $I''$. So for every line on $k(\ell) \geq 2$ points there are $k(\ell)-2$ more incidences in $I''$ than there are in $J$. 

Therefore, the following holds for the graph $G''=(V'',E'')$ with incidence relation $I''\subseteq V''\times E''$  and the incidence geometry $(Q,M,J)$: 
$$\begin{array}{lcl}
|Q|&=&|V''|,\\\\
|M|&=&|E''|-\Sigma_{\ell\in M } (k(\ell)-2), \mbox{ and}\\\\
|J|&=&|I''|-\Sigma_{\ell \in M} (k(\ell)-2).
\end{array}$$
Each edge in $E''$ contributes two incidences, $|I''|=2|E''|$.
Also, $|E''| \leq 2|V''|-3=2|Q|-3$ since $G'$ is independent. Therefore
$$\begin{array}{rcl}
  |J|&=&|I''|-\Sigma_{\ell\in M} (k(\ell)-2) = 2|E''|-\Sigma_{\ell \in M} (k(\ell)-2) = |M|+|E''|\\\\
  &\leq&  |M|+2|V''|-3=|M|+2|Q|-3.
\end{array}$$

We have shown that if $(Q,M,J)$ can be constructed from a subgraph $G''=(V'',E'')$ of $G'$, then $|J|\leq |M|+2|Q|-3$. If $|E''|=2|V''|-3$, then equality holds in the final inequality. In particular, the subgeometry constructed from $E'$ is $S'$. If we, as in point 3 of the Lemma, assume that $|E'|=2|V'|-3$, then $|I'| = |L'|+2|P'|-3$. 


We now consider the case that $J\subseteq I'$ with support  $Q\times M \subseteq P'\times L' $ is \textit{not} the set of incidences of an incidence geometry obtained from a subgraph of $G'$ in the same manner $S'$ is obtained from $G'$, and we will see that still $|J| \leq |M|+2|Q|-3$.

Suppose that for any incidence $(p, \ell) \in J$ between a point $p$ and a line $\ell$ in $J$, there is at least one edge $(p,q)$ in the star graph in $G'$ on vertices representing points incident to $\ell$ such that the incidence $(q, \ell) \in J$. Then the incidence structure obtained from the subgraph $G''=(V'',E'')$ with $V''=Q$ and edge set given by edges $(p,q)$ such that $(p,q) \in E'$ and the incidences $(p, \ell)$ and $(q, \ell)$ are in $J$ is $(Q,M,J)$. 

Therefore, if $J$ cannot be obtained from a subgraph of $G'$ as described above, $J$ must contain an incidence $(p,\ell)$ between  $p\in Q\subseteq P'$ and  $\ell\in M\subseteq  L'$ such that if $e=(p,q)$ is any edge incident with $p$ in the star graph in $G'$ on the vertices representing points incident to $\ell$, then the incidence $(q,\ell)\not\in J$.

\textit{Case 1.}
Suppose $p$ is the only element in $Q$ incident with $\ell$. In this case, the removal of $(p,\ell)$ from $J$ and the removal of $\ell$ from $M$ subtracts the same from both sides of the inequality, so these subtractions together preserve the inequality (\ref{inequality}). 

\textit{Case 2.}
Suppose $\ell$ is incident with some other points $p_1,\dots,p_t$ in $Q$, with $t\geq 1$.  Suppose also that $q\not\in Q$. In this case $\ell$ must be a line of the original incidence geometry $S$, with at least three incident points. Adding the missing point $q$ to $Q$ and the incidence $(q,\ell)$ to $J$ will add one to the left side and two to the right side of the inequality.

Let $c$ be the cone vertex corresponding to $\ell$.

\textit{Case 2.1.} Suppose $c\not \in Q$. Then none of the lines to $c$ in $S'$ are in $M$. We know that there are at least three such lines in $L^C$ (namely $(p,c)$, $(p_1,c)$ and $(q,c)$). By the construction of $G'$, these edges are also in $L'$.  Add $c$ to $Q$ and three of the lines incident with $c$ to $M$. This adds $5$ to the right side of the inequality  (\ref{inequality}). The three lines have a total of $6$ incidences in $S'$. Add these incidences to $J$. This adds $6$ to the left side of the inequality.   These additions (together with the addition of $q$ and $(q,\ell)$)  preserve the inequality (\ref{inequality}).

\textit{Case 2.2.}
Suppose instead that $c\in Q$.

\textit{Case 2.2.1.} Suppose that the line $\ell'$ between $c$ and $q$ is in $M$, then add the incidence between $q$ and this line. This adds one to the left side of inequality (\ref{inequality}). Together with the addition of $q$ and the incidence $(q, \ell)$, this preserves the inequality (\ref{inequality}).

\textit{Case 2.2.2.}
Suppose instead $\ell' \notin M$. Then add the line $\ell'$ to $M$ and the two incidences $(c,\ell')$ and $(p',\ell')$ to $J$. This adds one to the right side and two to the left side of the inequality (\ref{inequality}). Together with the addition of $q$ and the incidence $(q, \ell)$, this preserves the inequality (\ref{inequality}).


In Case 1 and Case 2, it is possible to add or remove incidences in such a way that the inequality (\ref{inequality}) is preserved, and so that the result is a subset of incidences that can be represented by a subgraph. The inequality (\ref{inequality}) holds for subsets of incidences that can be obtained from subgraphs of $G'$, so it must also hold for $J$.

\textit{Case 3.} Suppose $\ell$ is incident to some other points $p_1, \dots, p_t$ in $Q$ and $q \in Q$. Then we can add the incidence $(q, \ell)$ to $J$. This adds one incidence to $J$, so one to the left side of the inequality (\ref{inequality}). Therefore, if the subgeometry resulting from adding $(q, \ell)$ to $J$ satisfies the inequality (\ref{inequality}), so does the original subgeometry $(Q,M,J)$. The resulting subset of incidences can be obtained from a subgraph of $G'$, and we have seen that the inequality (\ref{inequality}) holds for such subsets. Therefore the inequality (\ref{inequality}) holds for all subsets $J \subset I'$, so $S'$ is sharply independent. 

Recall that we also saw, when considering subsets of incidences obtained from subgraphs of $G'$, that if $|E'|=2|V'|-3$, then $S'$ satisfies $|I'|=|L'|+2|P|-3$. Note that $I'$ is obtained from a subgraph of $G'$, namely $G'$ itself, so there is nothing to show in order to prove point 3 when considering subsets of incidence geometries not obtained from subgraphs of $G'$. We have therefore seen that point 3 of the Lemma holds.  

In conclusion, we have shown that $G'$ is maximally independent. We have also shown that $S'$ is sharply independent, and if we assume that $|E'| = 2|V'| - 3$, then $|I'|=|L'| + 2|P| -3$.
\end{proof}

\begin{remark}
	If we consider an arbitrary maximally independent subgraph $G'$ of a cone graph $G^C(S)$ an incidence geometry $S$, and construct $S'$ as in Lemma \ref{Construction}, $S'$ will not always be sharply independent. If we construct $S'$ from the maximally independent subgraph $G'$ constructed in Lemma \ref{Construction}, $S'$ will be sharply independent, which we will use when proving our main theorem.
\end{remark}

	

\section{Determining the infinitesimal rigidity of rod configurations}

The main theorem of this paper, Theorem \ref{Main}, relates the rigidity properties of a cone graph $G^C(S)$ in $\mathbb{R}^2$ to the rigidity properties of certain string configurations realizing the cone incidence geometry $S^C$ in $\mathbb{R}^2$.  We can therefore determine the infinitesimal rigidity of rod configurations realizing $S$ in $\mathbb{R}^2$ by determining the rigidity of any cone graph of $G^C(S)$, which can be done efficiently by the pebble game algorithm. We will discuss the computational aspects further in the next section. 

In order to state our main theorem we need a notion of genericity for linear realizations. One natural notion could be formulated in terms of line slopes - namely that the line slopes are linearly independent over $\mathbb{Q}$. However, with that definition, only incidence geometries with sharply independent sets of incidences can have proper generic linear realizations. As we are considering incidence geometries that do not necessarily have sharply independent sets of incidences, we will instead introduce the notion of regularity. We say that a linear realization $\rho$ of an incidence geometry $S=(P,L,I)$ is \textit{regular} if the rows of the concurrency matrix $M(S, \rho)$ corresponding to any sharply independent subset of $I$ are independent.

In other words, a proper linear realization is regular if any subset of incidences that can correspond to independent rows of $M(S, \rho)$ also corresponds to independent rows of $M(S, \rho)$.

\begin{theorem}
Let $S=(P,L,I)$ be an incidence geometry such that $S^C$ has a realization as a regular proper string configuration in $\mathbb{R}^2$. Then all cone graphs of $S$ are rigid in $\mathbb{R}^2$ if and only if every regular proper string configuration realizing $S^C$ in $\mathbb{R}^2$ is infinitesimally rigid.
	\label{Main}
\end{theorem}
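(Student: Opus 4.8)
The plan is to prove both implications by moving between two combinatorial quantities: the rank of the generic two-dimensional rigidity matroid of the cone graph $G^C(S)$ on its edge set $E^C$, and the maximal size of a sharply independent subset of $I^C$, which for any regular $\rho$ equals the rank of the concurrence geometry matrix $M(S^C,\rho)$ (a non-sharply-independent set of rows is always dependent, and regularity makes every sharply independent set independent). Throughout I will use two things recorded in the preliminaries: a string configuration realizing $S^C$ whose trees are stars is exactly a framework of a cone graph $G^C(S)$ in the special collinear position, so its infinitesimal motions coincide with those of the graph framework; and by Lemma \ref{RigidConeGraph} it is enough to reason with a single cone graph.

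For the implication that ``every regular proper string configuration is infinitesimally rigid'' forces all cone graphs to be rigid, I would argue directly and briefly. By hypothesis such a realization exists, and by assumption it is infinitesimally rigid; regarded as a framework of $G^C(S)$ it is then an infinitesimally rigid framework of that graph. Lemma \ref{GenericRigidity} immediately yields that every generic framework of $G^C(S)$ is rigid, so $G^C(S)$ is rigid, and Lemma \ref{RigidConeGraph} propagates rigidity to all cone graphs of $S$.

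The substantial direction assumes all cone graphs of $S$ are rigid and takes an arbitrary regular proper string configuration $\sigma$ realizing $S^C$; the goal is to show $\sigma$ is infinitesimally rigid. Applying Lemma \ref{Construction} to a cone graph $G^C(S)$ produces a maximally independent spanning subgraph $G'=(V',E')$ together with a sharply independent subgeometry $S'=(P',L',I')$ of $S^C$ with $P'=P^C$. Since $G^C(S)$ is rigid, its rigidity matroid has rank $2|V^C|-3$, so the maximally independent $G'$ satisfies $|E'|=2|V'|-3$ and is minimally rigid; by the third conclusion of Lemma \ref{Construction} this forces $|I'|=|L'|+2|P'|-3$. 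Theorem \ref{Whiteley5.2} then applies to $S'$: it has a realization as a minimally infinitesimally rigid string configuration, and any realization of $S'$ whose underlying linear realization maximizes the rank of $M(S',\rho)$ is infinitesimally rigid. The crucial observation is that the restriction of $\sigma$ to $S'$ is rank-maximizing: since $\sigma$ is regular and $I'$ is sharply independent, the rows of $M(S^C,\rho)$ indexed by $I'$ are independent, and as these rows only meet the columns associated with $L'$ and $P'$, the matrix $M(S',\rho)$ attains its maximal rank $|I'|=|L'|+2|P'|-3$.

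It then remains to assemble these facts at the level of frameworks, where the relevant count is uniform. The restriction $\sigma|_{S'}$ is an infinitesimally rigid string configuration on the point set $P'=P^C$, hence an infinitesimally rigid framework of $G'$ on the full vertex set $V^C$, with rigidity matrix of rank $2|V^C|-3$. Since $G'$ is a spanning subgraph of $G^C(S)$, the framework $\sigma$ has the same vertices and a superset of the edges of $\sigma|_{S'}$, and adjoining edges to an infinitesimally rigid framework on a fixed vertex set cannot lower its rank; therefore $\sigma$ is infinitesimally rigid. I expect the main obstacle to lie in this direction, and in particular in the point that one must compare $\sigma$ with the spanning subgeometry $S'$ furnished by Lemma \ref{Construction} rather than with all of $S^C$: because $S'$ already spans every point of $P^C$ while retaining only incidences that can be independent, the possibly smaller line set $L'$ never obstructs the argument, even though $|L'|$ may be strictly smaller than $|L^C|$. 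Verifying carefully that regularity for $S^C$ transfers to rank-maximality for $S'$, and that the correspondence between string configurations and cone-graph frameworks is compatible with passing to the subgeometry $S'$, are the places that demand the most care.
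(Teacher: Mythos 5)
Your proposal is correct and follows essentially the same route as the paper: the easy direction via Lemma \ref{GenericRigidity} and Lemma \ref{RigidConeGraph}, and the converse via the maximally independent spanning subgraph and sharply independent subgeometry $S'$ from Lemma \ref{Construction}, Theorem \ref{Whiteley5.2}, and the observation that the restricted string configuration is a spanning minimally rigid subframework. Your explicit justification that regularity of $\sigma$ together with sharp independence of $I'$ forces $M(S',\rho)$ to attain maximal rank is a welcome elaboration of a step the paper states more tersely.
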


\begin{proof}

By assumption $S^C$ has a regular proper realization as a string configuration in $\mathbb{R}^2$. We may choose the collinear trees representing the lines in the string configuration to be star graphs, as the choice of collinear trees does not affect the infinitesimal rigidity of the string configuration. Furthermore, a string configuration realizing $S^C$ in $\mathbb{R}^2$ with the collinear trees representing the lines chosen to be star graphs, is a framework of some cone graph $G^C(S)$ in non-generic position. 

Suppose first that every regular proper string configuration of $S^C$ is infinitesimally rigid. Then this framework of $G^C(S)$ in non-generic position is infinitesimally rigid. Hence if every regular proper string configuration realizing $S^C$ in $\mathbb{R}^2$ is infinitesimally rigid, then $G^C(S)$ has an infinitesimally rigid framework in $\mathbb{R}^2$, which means that $G^C(S)$ is rigid in $\mathbb{R}^2$, by Lemma \ref{GenericRigidity}. So $S$ has a cone graph which is rigid in $\mathbb{R}^2$, which by Lemma \ref{RigidConeGraph} means that all cone graphs of $S$ are rigid in $\mathbb{R}^2$.

Conversely, suppose that all cone graphs of $S$ are rigid in $\mathbb{R}^2$. Then the maximally independent subgraph constructed in Lemma \ref{Construction} is minimally rigid in $\mathbb{R}^2$, as it is a maximally independent subgraph of graph which is rigid in $\mathbb{R}^2$. So, by Lemma \ref{Construction}, the induced subgeometry $S'$ is sharply independent and satisfies $|I'| = |L'| + 2|P'| -3$. It follows that any regular proper realization of $S'$ as a string configuration in $\mathbb{R}^2$ is minimally infinitesimally rigid, by Theorem \ref{Whiteley5.2}.  Any regular proper realization of $S^C$ as a string configuration in the plane induces a regular proper realization of $S'$ as a string configuration in the plane, which must be infinitesimally rigid. This implies that the original string configuration realizing $S^C$ was also infinitesimally rigid, since the string configuration realizing $S'$ is a spanning minimally rigid subgraph of the original string configuration realizing $S^C$.
\end{proof}

Given a rod configuration of $S$ in $\mathbb{R}^2$, it is possible to find infinitely many proper string configurations realizing $S^C$. If any such string configuration is regular, that string configuration is infinitesimally rigid if and only if $S$ has a cone graph which is rigid in $\mathbb{R}^2$, by Theorem \ref{Main}. As the rod configuration realizing $S$ is infinitesimally rigid if and only if the regular proper string configuration realizing $S^C$ is infinitesimally rigid, and since it is easy to determine rigidity of graphs in the plane, this gives a way of determining the infinitesimal rigidity of rod configurations.

We also obtain the following corollary, which does not require that we know that $S^C$ has a realization as a regular proper string configuration.

\begin{corollary}
	Let $S=(P,L,I)$ be an incidence geometry. If $G^C(S)$ is flexible in $\mathbb{R}^2$, then $S^C$ does not have a realization as an infinitesimally rigid regular proper string configuration in $\mathbb{R}^2$.
\end{corollary}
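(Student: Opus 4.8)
The plan is to prove the contrapositive: assuming that $S^C$ admits a realization as an infinitesimally rigid regular proper string configuration in $\mathbb{R}^2$, I would show that $G^C(S)$ is rigid in $\mathbb{R}^2$. This is essentially the forward half of the argument used in the proof of Theorem \ref{Main}, but weakened so that only a \emph{single} infinitesimally rigid string configuration is needed rather than infinitesimal rigidity of all of them. Consequently, unlike Theorem \ref{Main}, no a priori assumption that $S^C$ has a regular proper realization is required.

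First I would invoke the remark that the choice of collinear tree on the point vertices incident to each line does not affect the infinitesimal rigidity of a string configuration. Hence, starting from the given infinitesimally rigid regular proper string configuration realizing $S^C$, I may replace each collinear tree by a star graph without destroying infinitesimal rigidity. The resulting configuration is then a framework of some cone graph $G^C(S)$, realized in (possibly non-generic) special position, and it is still infinitesimally rigid.

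The key step is then to apply Lemma \ref{GenericRigidity}. Since $G^C(S)$ possesses an infinitesimally rigid framework --- even though that framework sits in special, non-generic position --- every generic framework of $G^C(S)$ is rigid, and so $G^C(S)$ is rigid in $\mathbb{R}^2$. Finally, Lemma \ref{RigidConeGraph} guarantees that if one cone graph of $S$ is rigid, then all cone graphs of $S$ are rigid; rigidity is therefore a well-defined property of $S$ that does not depend on the choice of inner vertices. Chaining these implications gives the contrapositive, and hence the corollary.

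The only genuine subtlety I would be careful about is that the given string configuration need not itself be a framework of a cone graph, since its collinear trees may be arbitrary trees rather than stars. The remark on the tree-independence of rigidity, together with the replacement of each tree by a star graph, is exactly the device that bridges this gap and brings us into the setting where Lemma \ref{GenericRigidity} applies. It is worth emphasising that Lemma \ref{GenericRigidity} is stated for an \emph{arbitrary} infinitesimally rigid framework, not merely a generic one, which is what lets the non-generic special-position framework suffice. Beyond this, the argument is a direct composition of Lemma \ref{GenericRigidity} and Lemma \ref{RigidConeGraph}, with no delicate counting required.
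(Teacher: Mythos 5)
Your argument is correct, but it is organized differently from the paper's. The paper proves the corollary as a two-line consequence of Theorem \ref{Main}: assuming $G^C(S)$ is flexible, either $S^C$ has a regular proper string realization, in which case Theorem \ref{Main} forces every such realization to be infinitesimally flexible, or it has none, in which case the conclusion is vacuous. You instead prove the contrapositive directly from the underlying lemmas, effectively inlining the forward direction of the proof of Theorem \ref{Main}: replace the collinear trees by stars (using the remark that tree choice does not affect infinitesimal rigidity), obtain an infinitesimally rigid framework of a cone graph in special position, and conclude via Lemma \ref{GenericRigidity} and Lemma \ref{RigidConeGraph}. What your route buys is twofold: it makes explicit that only a \emph{single} infinitesimally rigid regular proper string configuration is needed (a genuine weakening of the hypothesis of Theorem \ref{Main}), and it shows cleanly why the corollary needs no a priori existence assumption, which the paper instead handles by the vacuous case split. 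What the paper's route buys is brevity and the reuse of an already-proved theorem. Your observation that Lemma \ref{GenericRigidity} accepts an arbitrary (non-generic) infinitesimally rigid framework is exactly the point that makes the special-position framework usable, and it is the same point the paper relies on inside the proof of Theorem \ref{Main}.
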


\begin{proof}
	Suppose $G^C(S)$ is flexible in $\mathbb{R}^2$. If $S^C$ has a realization as a regular proper string configuration in $\mathbb{R}^2$, all its realizations as regular proper string configurations in $\mathbb{R}^2$ are infinitesimally flexible by Theorem \ref{Main}. If $S^C$ does not have a realization as a regular proper string configuration in $\mathbb{R}^2$, it also does not have a realization as an infinitesimally rigid regular proper string configuration in $\mathbb{R}^2$.
\end{proof}

Frameworks of a cone graph of an incidence geometry $S$ in the plane are body-and-joint frameworks realizing $S$, and similarly string configurations realizing $S^C$ in the plane are body-and-joint frameworks realizing $S$ such that all joints incident to a body are collinear. This is how Theorem \ref{Main} can be seen as a version of the molecular conjecture (Theorem \ref{molecular}) in the plane for general incidence geometries. The molecular conjecture and Theorem \ref{Whiteley5.2} concern incidence geometries that always have realizations as regular proper rod configurations. This is not the case for general incidence geometries, which is why we assume that there is a regular proper string configuration realizing $S^C$ in the plane in Theorem \ref{Main}.

\section{Algorithmic aspects}

Jacobs and Hendrickson introduced an algorithm for determining the independence of edges in a graph, known as the pebble game algorithm \cite{Pebbles}. The pebble game algorithm determines the rigidity of the input graph in the plane. Therefore, by Theorem \ref{Main}, we can determine the infinitesimal rigidity of regular proper rod configurations realizing an incidence geometry $S$ by applying the pebble game algorithm to a cone graph of $S$. We begin this section by introducing the algorithm itself.

\subsection{The pebble game algorithm}

The input of the pebble game algorithm is a graph $G=(V,E)$ and an ordering of the edges. The output of the algorithm is a maximally independent subgraph of $G$. From the end state of the algorithm it is also possible to see whether the maximally independent subgraph satisfies $|E|=2|V|-3$, and is minimally rigid by Theorem \ref{Geiringer-Laman}. As noted in \cite{Pebbles}, we may consider the edges in any order, although changing the order of the edges may affect which maximally independent subgraph is produced by the algorithm.

To initialize the algorithm, each vertex is given two pebbles. 

Following the ordering of the edges, the algorithm checks whether there is a total of four pebbles at the endpoints of an edge $e$. If there is, then the edge is accepted. A pebble is then removed from one of the endpoints of $e$, and $e$ is directed from that endpoint.

If there is not a total of four pebbles at the endpoints of $e$, then the algorithm searches directed paths starting at the endpoints of $e$. If a pebble is found, the direction of the edges in the path are reversed and the pebble is moved to the relevant endpoint of $e$. The edge is accepted if there is a total of four pebbles at the endpoints of $e$.

The algorithm stops when no more edges can be accepted, and the output of the algorithm is the accepted edges.






The accepted edges generate a maximally independent subgraph of $G$. The number of remaining pebbles tells us whether the graph is rigid or flexible. If there are more than three remaining pebbles, then the input graph is flexible. If all edges are accepted, then the input graph is independent, otherwise it is not. 

If there are exactly three remaining pebbles, then the input graph is rigid. If all edges are accepted, then the input graph is minimally rigid, otherwise it is rigid but not minimally rigid. 

See \cite{Pebbles} for more applications of the pebble game algorithm, and see also \cite{k_l_pebbles} for an extension of Jacobs and Hendricksons pebble game to other similar matroids.

\subsection{Determining the infinitesimal rigidity of rod configurations using the pebble game algorithm}

By Theorem \ref{Main} we can determine if a regular proper rod configuration realizing an incidence geometry $S$ is infinitesimally rigid by applying the pebble game algorithm with any ordering of the edges to any cone graph $G^C(S)$. The output of the pebble game algorithm will be some maximally independent subgraph of $G^C(S)$. By Theorem \ref{Main}, any regular proper rod configuration realizing $S$ is infinitesimally rigid if and only if the maximally independent subgraph is rigid, i.e. if there are exactly three remaining pebbles at the end state of the algorithm.

A rod configuration is \textit{minimally infinitesimally rigid} if it is infinitesimally rigid if removing any rod, but no points, results in an infinitesimally flexible rod configuration. We explore the problem of determining when a rod configuration is minimally infinitesimally rigid further in another article \cite{rods}. 

By removing a rod and using the pebble game algorithm to decide whether the resulting rod configuration is infinitesimally rigid, we can check whether the rod can be removed without the rod configuration becoming infinitesimally flexible. In this way, we can check for all rods in the rod configuration whether they can be removed, and decide whether the rod configuration is minimally infinitesimally rigid.

We note also that the pebble game can be used to construct the specific maximally independent subgraph of a cone graph $G^C(S)$ required in the proof of Lemma \ref{Construction}. Given an ordering of the lines, we can test independence of the edges in the cone of a line in the ordering specified in the proof of Lemma \ref{Construction}. We can do this for each line in the specified ordering.

Running the pebble game algorithm on the cone graph with this ordering of the edges will give the maximally independent subgraph of $G^C(S)$ constructed in Lemma \ref{Construction}. 

A special feature of this minimally rigid subgraph $G'$ is that frameworks of $G'$ where vertices corresponding to points incident to some common line are collinear are also infinitesimally rigid. This is not in general true of all minimally rigid subgraphs of some rigid cone graph $G^C(S)$. See Figure \ref{Not_inf_rig} for an example.

\begin{figure}
	\begin{center}
	\begin{tabular}{cc}
	\begin{tikzpicture}
\filldraw[black] (0,0) circle (2pt);
\filldraw[black] (-1.5,-3) circle (2pt);
\filldraw[black] (1.5,-3) circle (2pt);
\filldraw[black] (0.3,-1.1) circle (2pt);
\filldraw[black] (-0.25,-1.5) circle (2pt);
\filldraw[black] (0.3,-2.75) circle (2pt);
\filldraw[black] (0.2,-1.75) circle (2pt);

\filldraw[black] (1.5,-0.8) circle (2pt);
\filldraw[black] (-1.4,-1) circle (2pt);
\filldraw[black] (-0.4,-3.5) circle (2pt);
\filldraw[black] (0.5,-2) circle (2pt);

\draw[thick] (0,0) -- (0.25, -1.1);
\draw[thick] (0.25,-1.1)--(1.5,-3);

\draw[thick] (0,0) -- (-0.25,-1.5);
\draw[thick] (-0.25,-1.5)--(-1.5,-3);

\draw[thick] (0.3,-1.1) -- (0.2,-1.75);
\draw[thick] (-1.5,-3) -- (0.2,-1.75);

\draw[thick] (1.5,-0.8)--(0,0);
\draw[thick] (1.5,-0.8)--(0.3,-1.1);
\draw[thick] (1.5,-0.8)--(1.5,-3);

\draw[thick] (-1.4,-1)--(0,0);
\draw[thick] (-1.4,-1)--(-0.25,-1.5);
\draw[thick] (-1.4,-1)--(-1.5,-3);

\draw[thick] (0.5, -2) -- (-1.5,-3);
\draw[thick] (0.5, -2) -- (0.3,-1.1);
\draw[thick] (0.5, -2) -- (0.2,-1.75);

\draw[thick] (1.5,-3) -- (0.3,-2.75);
\draw[thick] (-1.4,-3) -- (0.3,-2.75);

\draw[thick] (-0.4,-3.5) -- (1.5,-3);
\draw[thick] (-0.4,-3.5) -- (-1.5,-3);
\end{tikzpicture}
&
	\begin{tikzpicture}
\filldraw[black] (0,0) circle (2pt);
\filldraw[black] (-1.5,-3) circle (2pt);
\filldraw[black] (1.5,-3) circle (2pt);
\filldraw[black] (0.5,-1) circle (2pt);
\filldraw[black] (-0.5,-1) circle (2pt);
\filldraw[black] (0,-3) circle (2pt);
\filldraw[black] (0,-1.5) circle (2pt);

\filldraw[black] (1.5,-1) circle (2pt);
\filldraw[black] (-1.5,-1) circle (2pt);
\filldraw[black] (0,-3.5) circle (2pt);
\filldraw[black] (0.5,-2) circle (2pt);

\draw[thick] (0,0) -- (1.5,-3) -- (-1.5,-3)--(0,0);
\draw[thick] (-1.5,-3) -- (0.5,-1);

\draw[thick] (1.5,-1)--(0,0);
\draw[thick] (1.5,-1)--(0.5,-1);
\draw[thick] (1.5,-1)--(1.5,-3);

\draw[thick] (-1.5,-1)--(0,0);
\draw[thick] (-1.5,-1)--(-0.5,-1);
\draw[thick] (-1.5,-1)--(-1.5,-3);

\draw[thick] (0.5, -2) -- (-1.5,-3);
\draw[thick] (0.5, -2) -- (0.5,-1);
\draw[thick] (0.5, -2) -- (0,-1.5);

\draw[thick, ->] (0,-3) -- (0,-3.3);
\draw[thick] (0,-3.5) -- (1.5,-3);
\draw[thick] (0,-3.5) -- (-1.5,-3);
\end{tikzpicture}
	\end{tabular}
	\caption{A minimally rigid subgraph of the cone graph $G^C(S)$ from Figure \ref{Cone_graph_rods} that does not remain infinitesimally rigid when the relevant points are placed on a line.}	
		\label{Not_inf_rig}
	\end{center}
\end{figure}
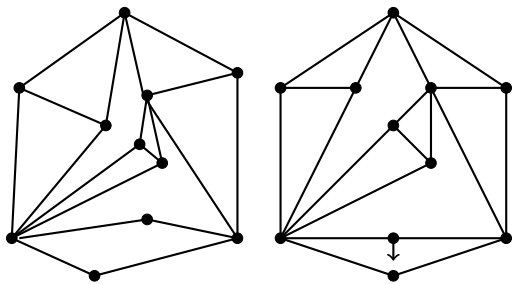

\subsection{Complexity}

Determining whether an edge is independent of the previously accepted edges requires $\mathcal{O}(v)$-time, where $v$ is the number of vertices in the input graph (Corollary 2.9, \cite{Pebbles}). Hence the algorithm runs in $\mathcal{O}(ve)$ time, where $e$ is the number of edges in the input graph. If the input graph is a cone graph $G^C(S)$ of an incidence geometry $S=(P,L,I)$, then $v= |P| + |L|$ is the number of vertices in $G^C(S)$, and $e=2|I|-|L|$ is the number of edges in $G^C(S)$. Hence determining the rigidity of a regular proper rod configuration can be done in $\mathcal{O}((|P|+|L|)(2|I|-|L|))$-time.

The chosen ordering of the edges does not affect the complexity, so finding the maximally independent subgraph constructed in Lemma \ref{Construction} can also be done in $\mathcal{O}((|P|+|L|)(2|I|-|L|))$-time.

In order to determine the minimal rigidity of a rod configuration, we may need to run the pebble game algorithm $|L|$ times, so determining the minimal rigidity of a rod configuration can be done in $\mathcal{O}(|L|(|P|+|L|)(2|I|-|L|))$-time.

\section{Concluding remarks}

In this article, we reduced the problem of determining the infinitesimal rigidity of any regular, proper rod configuration in the plane to determining the rigidity of its cone graph in the plane. We prove the equivalence between infinitesimal rigidity of regular proper rod configurations and rigidity of cone graphs in Theorem \ref{Main}, which can be seen as an extension of Theorem \ref{Whiteley5.2} to incidence geometries that do not necessarily have realizations as independent body-and-joint frameworks. 

It is interesting to ask whether Theorem \ref{Main} can be formulated and proved in arbitrary dimension, and worth noting that the molecular conjecture does hold in any dimension. There are a few key details of our proof that cannot be immediately generalized to higher dimensions, specifically those that rely on Theorem \ref{Geiringer-Laman} and Theorem \ref{Whiteley5.2}. 

The algorithm to determine whether a rod configuration is minimally infinitesimally rigid is based on the algorithm to determine whether a rod configuration is infinitesimally rigid, by in turn removing each rod and checking whether the resulting rod configuration is infinitesimally rigid. Because the algorithm to determine whether a rod configuration is infinitesimally rigid is polynomial time, so is the algorithm for determining whether a rod configuration is minimally infinitesimally rigid, but understanding minimal infinitesimal rigidity of rod configurations better could lead to even better algorithms.


\section{Acknowledgements}

The work has been supported by the Knut and Alice Wallenberg Foundation Grant 2020.0001 and 2020.0007.

\end{document}